\newtheorem{thm}{Theorem}[section]
\newtheorem{df}[thm]{Definition}
\newtheorem{remark}[thm]{Remark}
\newtheorem{ex}[thm]{Example}
\newtheorem{cor}[thm]{Corollary}
\newtheorem{prop}[thm]{Proposition}
\newtheorem{lem}[thm]{Lemma}
\numberwithin{equation}{section}
\newcommand{\ot}{\otimes}
\newcommand{\CC}{\mathcal{C}}
\newcommand\CF{\mathcal{F}}
\newcommand\CT{\mathcal{T}}
\newcommand{\BC}{\mathbb{C}}
\newcommand{\BN}{\mathbb{N}}
\newcommand{\BQ}{\mathbb{Q}}
\newcommand{\BR}{\mathbb{R}}
\newcommand{\BZ}{\mathbb{Z}}
\renewcommand\k{\Bbbk}
\newcommand{\eps}{\varepsilon}
\renewcommand{\a}{\alpha}
\renewcommand{\b}{\beta}
\newcommand\G{\Gamma}
\newcommand{\om}{\omega}
\def\Hom{{\mbox{\rm Hom}}}
\newcommand\id{\operatorname{id}}
\newcommand\Tr{\operatorname{Tr}}
\newcommand\tr{\operatorname{tr}}
\newcommand\qexp{\operatorname{qexp}}
\newcommand{\ds}{\displaystyle}
\newcommand\inv{^{-1}}
\renewcommand\o{\otimes}
\newcommand\Mod[1]{{#1\mbox{-\bf{mod}}_{\mathsf{fin}}}}
\renewcommand\mod[1]{{#1\mbox{-\bf{mod}}}}
\newcommand\lsub[1]{{}_{#1}\!}
\newcommand\ol{\overline}
\newcommand\qdim{\operatorname{qdim}_\ell}
\newcommand\qdimr{\operatorname{qdim}_r}
\newcommand\ann{\operatorname{ann}}
\newcommand\ev{\operatorname{ev}}
\newcommand\db{\operatorname{db}}
\newcommand\GL{\operatorname{GL}}
\newcommand\du{^\vee}
\newcommand\bidu{^{\vee\vee}}
\newcommand\bb{\mathbf{b}}
\newcommand\Irr{\operatorname{Irr}}
\begin{document}

\title{On the trace of the antipode and higher indicators}

\author{Yevgenia Kashina}
\address{Department of Mathematics and Statistics\\ Depaul University}
\email{ykashina@condor.depaul.edu}

\author{Susan Montgomery}\address{Department of Mathematics
\\ University of Southern California}\email{smontgom@math.usc.edu}
\thanks{The second author was supported by NFS grant DMS 07-01291}
\author{Siu-Hung Ng}
\address{Department of Mathematics and Statistics\\ Iowa State University}
\email{rng@iastate.edu}
\thanks{The third author was supported by NSA grant H98230-08-1-0078}
\begin{abstract}
We introduce two kinds of gauge invariants for any finite-dimensional Hopf algebra $H$. When $H$ is semisimple over $\BC$, these invariants are respectively, the trace of the map induced by the antipode on
the endomorphism ring of a self-dual simple module, and the higher Frobenius-Schur indicators of the regular representation. We further study the values of these higher indicators in the context of complex semisimple quasi-Hopf algebras $H$. We prove  that these indicators are non-negative provided the module category over $H$ is modular, and that for a prime $p$, the $p$-th indicator is equal to 1 if, and only if,  $p$ is a factor of $\dim H$. As an application, we show  the existence of a non-trivial self-dual simple $H$-module with bounded   dimension which is determined by the value of the second indicator.
\end{abstract}
%\begin{keyword}
%\end{keyword}

\maketitle
\setcounter{section}{0}
\section*{Introduction}
Given a Hopf algebra with certain additional structures such as braiding or ribbon, one can define some quantum invariants for knots, links or 3-manifolds (cf. \cite{Turaev}). These topological invariants are indeed determined by the monoidal structure of the representation category of the underlying Hopf algebra. In general, two Hopf algebras with inequivalent monoidal categories of their representations may yield two different quantum invariants. It is then very natural to ask when two finite-dimensional (quasi-) Hopf algebras have equivalent monoidal structures on their representation categories.

    The question was first addressed in \cite{Sch96} for the comodule categories of Hopf algebras in terms of Hopf bi-Galois extensions. The same question for the representation categories of finite-dimensional quasi-Hopf algebras was studied in \cite{EG1} and \cite{NS08}. The representation categories of quasi-Hopf algebras $H$ and $K$ are monoidally equivalent if, and only if, there exists a gauge transformation $F$ on $K$ such that $H$ is isomorphic, as quasi-bialgebra, to the twist $K^F$ of $K$ by $F$. However, to show the existence or non-existence of such gauge transformations and isomorphisms remains highly non-trivial. Therefore, this characterization may not be very practical for determining the gauge equivalence of two given quasi-Hopf algebras.

    Frobenius-Schur (FS) indicators for representations of finite groups were discovered more than a century ago. The notion was recently extended to rational conformal field theory \cite{Bantay97}, to certain $C^*$-fusion categories \cite{FGSV99}, to semisimple Hopf algebras \cite{LM}, to quasi-Hopf algebras \cite{MaN}, and more generally to pivotal categories \cite{NS07b}. It was shown in \cite{MaN} that the second indicators are invariants of the monoidal structure of the representation category of a semisimple quasi-Hopf algebra, and the same result for all these indicators was established in \cite{NS08} and \cite{NS07b}.

    One of the aims of this paper is to obtain invariants for the monoidal category of representations of any  finite-dimensional Hopf algebra $H$. In short, these invariants of $H$ are called \emph{gauge invariants}. Obviously, the dimension or the quasi-exponent (cf. \cite{EG2}) of a finite-dimensional Hopf algebra over $\BC$ are gauge invariants.  We approach this question by re-examining some formulae and properties of FS indicators for semisimple Hopf algebras over $\BC$.

    In \cite{KSZ2}, the $n$-th FS indicator of the regular representation of a semisimple Hopf algebra $H$ over $\BC$ with antipode $S$ is given by
    $$
    \nu_n(H)=\Tr(S \circ P_{n-1})
    $$
    where $P_n$ is the $n$-th Sweedler power map defined in Definition \ref{def:ind}. However, this expression is well-defined for any finite-dimensional Hopf algebra $H$, and we prove in Theorem \ref{t:nu_n} that $\nu_n(H)$ is a gauge invariant of $H$ for all natural number $n$. Moreover, we establish in Proposition \ref{p:recursive} that the sequence $\{\nu_n(H)\}_{n \in \BN}$ of these indicators  is linearly recursive, and hence the minimal polynomial for this sequence is also a gauge invariant of $H$. The second indicator $\nu_2(H)$ is simply the ordinary trace of $S$.  This can be computed quite handily for Taft algebras and they are sufficient to distinguish their gauge equivalence classes as shown in Section \ref{taft}.

    The trace of $S$ also induces another gauge invariant. For a self-dual absolutely simple $H$-module $V$, the antipode induces an automorphism $S_V$ on $H/\ann V$. We proved in Section \ref{TrS} that $\Tr(S_V)$ is also a gauge invariant of $H$. When $H$ is a complex semisimple Hopf algebra, $\Tr(S_V)$ is equal to the product $\dim V \cdot \nu_2(V)$ of the dimension and the second FS indicator of $V$ \cite{LM}. In general, if $H$ is pivotal, we prove here in Proposition \ref{p:product} that
    \begin{equation}\label{eq:01}
     \Tr(S_V) = \qdim V \cdot \nu_2(V)
    \end{equation}
    where $\qdim V$ and $\nu_2(V)$ are respectively the quantum dimension and the 2nd indicator of $V$ relative to any pivotal element of $H$.

     In the semisimple case, FS indicators have also been studied extensively in many different contexts (cf. \cite{KMM}, \cite{KSZ2}, \cite{Sch04}, \cite{NS07a}). The explorations carried out in those articles discovered some interesting relations between the dimension of the algebra, or the category, and the values of these indicators.

     We learned in group theory that if a finite group $G$ admits a self-dual complex irreducible representation, then $|G|$ must be even. In terms of FS indicators, this means if $G$ admits an irreducible representation with non-zero second indicator, then $|G|$ is even. This result was generalized to semisimple Hopf algebras over an algebraically closed field of characteristic 0 in \cite{KSZ}. In this paper, we extend this result in Theorem \ref{t:primeind} to any semisimple quasi-Hopf algebra over $\BC$ at any prime degree  FS indicator.

     The value of $\nu_2(H)$ or $\Tr(S)$ is of particular interest. In the group case, it counts the number of involutions in the group and hence it is always positive. We show by an example in Section \ref{values} that this is not necessarily the case for Hopf algebras. However, it is worth noting that if the representation category of a semisimple complex quasi-Hopf algebra $H$ is modular, then the FS indicator $\nu_2(H)$ is a non-negative integer. In particular, if $H$ is a semisimple factorizable complex Hopf algebra, then $\Tr(S)$ is a non-negative integer and it has the same parity as $\dim H$. The question of whether $\Tr(S)=0$ for some semisimple complex Hopf algebra is not settled in this paper.

     The actual value $\Tr(S)$ of a complex even dimensional semisimple Hopf algebra $H$ can also be used to determine the existence of a non-trivial self-dual irreducible representation $V$ of $H$ such that
     $$
     \dim V \le \frac{\dim H-1}{|\Tr(S)-1|}\,.
     $$
     Moreover, $\nu_2(V)=1$ if $\Tr(S) >1$, and $\nu_2(V)=-1$ if $\Tr(S)<1$. This  result is proved in Theorem \ref{bound}.

     The paper is organized as follows: We collect some basic definitions, conventions and results in Section 1. In Section 2, we introduce the gauge invariant $\nu_n(H)$ for any finite-dimensional Hopf algebra $H$, and study the sequence $\{\nu_n(H)\}_{n \in \BN}$ of these invariants. In particular, $\nu_2(H)$ is the trace of the antipode. We compute the invariant $\nu_2(T)$ for all Taft algebras $T$, and the sequence $\{\nu_n(T)\}$ for $T$ of dimensions 4 and 9 in Section 3. We continue to introduce another gauge invariant $\gamma(V)$ for all absolutely simple $H$-modules $V$ in Section 4, and we show that $\gamma(V)$ is the product of the quantum dimension and the 2nd Frobenius-Schur of $V$ when $H$ is a pivotal Hopf algebra \eqref{eq:01}. In Section 5, we  study  the values of $\nu_n(H)$ and the positivity of $\nu_2(H)$ for a complex semisimple (quasi-) Hopf algebra $H$ in relation to its dimension. Finally, in Section 6, we show an application of $\nu_2(H)$ in determining the existence of certain self-dual simple submodules of a semisimple Hopf algebra $H$ over $\BC$.

%------------------------------------------------------------------------------------------------------
\section{Preliminaries}
In this section, we review some basic facts on gauge equivalence of Hopf algebras, and define some terminology and conventions that we will use in the paper. Throughout this paper, the tensor product of two vector spaces $V, W$ over the base field $\k$ will be denoted by $V \ot W$.

Let $H$ be a Hopf algebra over the field $\k$ with counit $\eps$, comultiplication $\Delta$ and antipode $S$. We will use the Sweedler notation $\Delta(h)=h_1 \o h_2$ for  $h \in H$ with the summation suppressed.  Following \cite{Kas}, a \emph{gauge transformation} of a Hopf algebra $H$ is an invertible element $F \in H^{\ot 2}$ such that $(\eps \ot \id)(F) = (\id \ot \eps)(F)=1$. One can \emph{twist} the Hopf algebra $H$ with a gauge transformation $F$ to a quasi-Hopf algebra $H^F$ which is the same algebra $H$ with the same counit $\eps$ but its \emph{comultiplication} $\Delta^F$ is given by
$$
\Delta_F(h) = F \Delta(h) F\inv \quad \text{for } h \in H\,.
$$
The quasi-Hopf algebra $H^F$ is an \emph{ordinary} Hopf algebra if, and only if, the gauge transformation $F$ satisfies
\begin{equation}\label{eq:pF}
1 \ot 1 = \partial F=(1 \ot F)(\id \ot \Delta)(F)(\Delta \ot \id)(F\inv)(F\inv \ot 1).
\end{equation}
In general, we call a gauge transformation $F$ of a Hopf algebra $H$ a 2-cocycle if $F$ satisfies \eqref{eq:pF}.

If $F = \sum_{i} f_i \ot g_i$ is a 2-cocycle of $H$ with inverse $F\inv = \sum_i d_i \ot e_i$, then \eqref{eq:pF} implies
\begin{equation}\label{eq:ab2}
 \a_F\b_F=1, \quad \text{and}\quad \b_F\a_F=1\,.
\end{equation}
where
\begin{equation} \label{eq:ab1}
 \a_F=\sum_j S(d_j)e_j \quad\text{and} \quad \b_F = \sum_i f_iS(g_i)\,.
\end{equation}
Moreover, $H^F$ is a Hopf algebra with its antipode $S^F$ defined by
\begin{equation}\label{eq:S_F}
S^F(h)= \b_F S(h) \a_F
\end{equation}
 The Hopf algebra  $H^F$ is called a  (\emph{Drinfeld}) \emph{twist} of the Hopf algebra $H$
by the 2-cocycle $F$. It is worth noting that \eqref{eq:ab2} does not hold in general if $F$ is
not a 2-cocycle.

Two Hopf algebras $H$ and $H'$ over a field $\k$ are said to be gauge equivalent if there exists a 2-cocycle $F$ of $H$ such that $H'\stackrel{\sigma}{\cong} H^F$ are isomorphic as bialgebras. The isomorphism $\sigma$ induces a $\k$-linear equivalence $\lsub \sigma (-): \mod{H} \to \mod{H'}$ as follows: for $V \in \mod{H}$, $\lsub \sigma V=V$ as $\k$-linear space with the $H'$-action given by
$$
h' v := \sigma(h')v \quad \text{for all $h' \in H'$ and $v \in V$,}
$$
and $\lsub\sigma(f) = f$ for any map $f$ in $\mod{H}$. The 2-cocycle $F$ defines the natural isomorphism
$$
\xi:= \left(\lsub\sigma V \o \lsub\sigma W \xrightarrow{F\cdot} \lsub \sigma (V \o W)\right)
$$
for any $V, W \in \mod{H}$. The triple $(\lsub \sigma(-), \xi, \id_\k )$ is an equivalence of tensor categories.

Conversely, if $H, H'$ are finite-dimensional such that $\mod{H}$ and $\mod{H'}$ are equivalent tensor categories, then it follows by \cite{Sch96} (or more generally \cite{EG1}, \cite{NS08}) $H$ and $H'$ are gauge equivalent. We summarize this Hopf algebra version of \cite[Theorem 2.2]{NS08} as follows.
\begin{thm}\label{t:p1}
Two finite-dimensional Hopf algebras $H$ and $H'$ over a field $\k$ are gauge equivalent if, and only if, $\Mod{H}$ and $\Mod{H'}$ are equivalent as ($\k$-linear) tensor categories. Moreover,  if a $\k$-linear functor $\CF: \mod{H} \to \mod{H'}$ defines an equivalence of tensor categories, then there exist a 2-cocycle $F$ of $H$ and a bialgebra isomorphism $\sigma: H' \to H^F$ such that $\k$-linear equivalences $\CF$ and $\lsub\sigma(-)$ are naturally isomorphic.\qed
\end{thm}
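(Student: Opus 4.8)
The \emph{only if} direction has already been carried out in the discussion preceding the statement: given a 2-cocycle $F$ of $H$ and a bialgebra isomorphism $\sigma\colon H'\to H^F$, the functor $\lsub\sigma(-)$ equipped with the monoidal constraint $\xi=F\cdot(-)$ forms a tensor equivalence $\mod{H}\to\mod{H'}$. The plan is therefore to establish the \emph{moreover} clause, from which the \emph{if} direction is an immediate consequence.

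Given a $\k$-linear tensor equivalence $\CF\colon\mod{H}\to\mod{H'}$, I would first form the composite $U'\circ\CF\colon\mod{H}\to\mathbf{Vec}_\k$, where $U'\colon\mod{H'}\to\mathbf{Vec}_\k$ is the underlying forgetful functor. Since $U'$ is a faithful exact $\k$-linear strong monoidal functor and $\CF$ is a tensor equivalence, this composite is again a fiber functor on $\mod{H}$. The aim is then to compare $U'\circ\CF$ with the standard forgetful fiber functor $U\colon\mod{H}\to\mathbf{Vec}_\k$.

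The key input, supplied by \cite{Sch96} (Hopf bi-Galois theory; see also \cite{EG1}, \cite{NS08}), is that every fiber functor on the representation category of a finite-dimensional Hopf algebra $H$ is, as a plain $\k$-linear functor, naturally isomorphic to $U$, and carries a monoidal constraint given by left multiplication by an invertible element $F\in H^{\ot2}$ satisfying the cocycle condition \eqref{eq:pF}. Concretely, reconstruction attaches to $U'\circ\CF$ an $H$-Galois object which, because the functor takes values in $\mathbf{Vec}_\k$, is cleft; a cleaving map then produces the 2-cocycle $F$ and the natural isomorphism $U'\circ\CF\cong U$ of functors. Running Tannaka reconstruction on the two presentations of this fiber functor returns $H'$ on one side (transported through the equivalence $\CF$ from the forgetful functor $U'$ on $\mod{H'}$) and the twist $H^F$ on the other, and matching the two reconstructed bialgebra structures yields a bialgebra isomorphism $\sigma\colon H'\to H^F$.

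Finally, I would trace through these identifications to promote the natural isomorphism $U'\circ\CF\cong U$ to a \emph{monoidal} natural isomorphism between $\CF$ and $\lsub\sigma(-)$, which is precisely the \emph{moreover} assertion; in particular $H'\cong H^F$ exhibits $H$ and $H'$ as gauge equivalent. The hard part will be the classification of fiber functors in the third step --- proving that an arbitrary fiber functor on $\mod{H}$ is monoidally isomorphic to a cocycle twist of the forgetful functor. This rests on the cleftness of the associated $H$-Galois object, where finite-dimensionality and the results of \cite{Sch96} are genuinely used; the surrounding Tannakian bookkeeping is then comparatively routine.
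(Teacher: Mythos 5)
Your outline follows exactly the route the paper itself relies on: the theorem is stated with no proof and attributed to \cite{Sch96} (and \cite{EG1}, \cite{NS08} for the quasi-Hopf version), and those sources argue precisely as you do --- compose $\CF$ with the forgetful functor, classify the resulting fiber functor on $\mod{H}$ via the associated Galois object (cleft automatically in finite dimension), extract the 2-cocycle $F$ and the isomorphism $U'\circ\CF\cong U$, and reconstruct to get $\sigma\colon H'\to H^F$ together with the monoidal natural isomorphism $\CF\cong\lsub\sigma(-)$. The proposal is correct and takes essentially the same approach as the paper's (cited) proof.
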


A quantity $f(H)$ defined for each Hopf algebra $H$ is called a \emph{gauge invariant} if $f(H)=f(H')$ for all Hopf algebras $H'$ which are gauge equivalent to $H$. We will introduce and discuss some gauge invariants in the remainder of this paper.

\section{Invariance of the $n$-th indicator $\nu_n(H)$}
In this section, we introduce a sequence of scalars $\nu_n(H)$ for each finite-dimensional Hopf algebra $H$ over a field $\k$. We obtain a formula for $\nu_n(H)$ in terms of the integrals of $H$ and $H^*$. Moreover, we prove that this sequence $\{\nu_n(H)\}_{n \in \BN}$ is linearly recursive and  is a gauge invariant of $H$. In particular, the \emph{minimal polynomial} $p_H$ of the sequence of higher indicators of $H$ is also a gauge invariant.

Let $H$ be a finite-dimensional Hopf algebra over a field $\k$ with antipode $S$, multiplication $m$, comultiplication $\Delta$ and counit $\eps$. We define
$$
 \Delta^{(1)}=\id_H, \quad \text{and} \quad
\Delta^{(n+1)} = (\id \o \Delta^{(n)})\circ \Delta
$$
for all integers $n \ge 1$. By the coassociativity of $\Delta$, it is well-known that
$$
\Delta^{(n+1)} = ( \Delta^{(n)} \o \id)\circ \Delta
$$
for all integers $n \ge 1$. Similarly, we let $m^{(1)}=\id_H$,\,$m^{(n)}: H^{\o n} \to H$ be the multiplication map of $H$.
Note that $\Delta^{(n+1)}$ defined here is equal to $\Delta_n$ in \cite[p 11]{Sw}.
\begin{df}\label{def:ind}
The \textbf{$n$-th Sweedler power} $P_n(h)=h^{[n]}$ of an element $h \in H$ is defined as
$$
P_n(h) = m^{(n)} \circ \Delta^{(n)}(h) \quad\text{for }n \ge 1,
$$
and we set $P_0(h)= h^{[0]}=\eps(h) 1$.
We define
$$
\nu_n(H) := \Tr(S \circ P_{n-1})
$$
for each positive integer $n$, and call it the \textbf{$n$-th indicator} of $H$.
\end{df}

Obviously, $\nu_1(H)=1$, and $\nu_2(H)=\Tr(S)$. When $H$ is semisimple and $\k$ is an algebraically closed field of characteristic zero, the $n$-th Frobenius-Schur indicator $\nu_n(V)$ is defined for each $V \in \mod{H}$. The scalar $\nu_n(H)$ coincides with the $n$-th Frobenius-Schur indicator of the regular representation of $H$ (cf. \cite{KSZ2}). It follows by the works of \cite{NS08} and \cite{MaN} that if $H'$ is a Hopf algebra and $\CF: \Mod{H} \to \Mod{H'}$ defines an equivalence of tensor categories, then
$\nu_n(V) = \nu_n(\CF(V))$ for all positive integer $n$ and $V \in \mod{H}$. Since $\CF(H)\cong H'$ as $H'$-modules, we have  $\nu_n(H) = \nu_n(H')$ for all positive integers $n$. Therefore,
the sequence $\{\nu_n(H)\}_{n \in \BN}$ is a gauge invariant of  \emph{semisimple} Hopf algebras over $\k$.

The notion of higher Frobenius-Schur indicators for a module over a \emph{general} finite-dimensional Hopf algebra $H$ remains unclear. Nevertheless, the following theorem shows the sequence $\{\nu_n(H)\}_{n \in \BN}$ is a gauge invariant.

\begin{thm}\label{t:nu_n}
 Let $H$ a finite-dimensional Hopf algebra over a field $\k$. Then the sequence $\{\nu_n(H)\}_{n \in \BN}$ is an invariant of the gauge equivalence class of Hopf algebras of $H$. If $\lambda \in H^*$ is a right integral and $\Lambda \in H$ is a left integral such that $\lambda(\Lambda)=1$, then
 $$
 \nu_n(H) = \lambda(S(\Lambda^{[n]}))
 $$
 for all positive integer $n$.
\end{thm}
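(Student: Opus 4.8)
The plan is to establish the integral formula first, both because it is the cleaner statement and because it furnishes the most convenient grip on the trace $\Tr(S\circ P_{n-1})$. The one nontrivial ingredient is the trace formula for a finite-dimensional Hopf algebra (Radford): for every $\k$-linear map $f\colon H\to H$ one has $\Tr(f)=\lambda(S(\Lambda_2)\,f(\Lambda_1))$, valid for a right integral $\lambda$, a left integral $\Lambda$, and $\lambda(\Lambda)=1$. This is exactly the statement that $H$ is Frobenius with form $\lambda(xy)$ and that $\Lambda_1\o S(\Lambda_2)$ is the associated Casimir element; concretely it reduces to the identity $\lambda(S(\Lambda_2)x)\Lambda_1=x$ for all $x\in H$, which I would verify directly. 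Applying the formula with $f=S\circ P_{n-1}$ and using that $S$ is an algebra anti-homomorphism gives $S(\Lambda_2)\,S(P_{n-1}(\Lambda_1))=S(P_{n-1}(\Lambda_1)\,\Lambda_2)$. Writing $\Delta^{(n)}(\Lambda)=u_1\o\cdots\o u_n$ and invoking coassociativity in the form $\Delta^{(n)}=(\Delta^{(n-1)}\o\id)\circ\Delta$, one reads off $\Delta^{(n-1)}(\Lambda_1)=u_1\o\cdots\o u_{n-1}$ and $\Lambda_2=u_n$, so $P_{n-1}(\Lambda_1)\,\Lambda_2=u_1\cdots u_n=P_n(\Lambda)=\Lambda^{[n]}$. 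Hence $\nu_n(H)=\lambda(S(\Lambda^{[n]}))$; apart from the trace formula, the argument is just coassociativity and anti-multiplicativity of $S$.

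For the invariance it suffices to take $H'=H^F$ for a 2-cocycle $F$ and to prove $\nu_n(H^F)=\nu_n(H)$. Since $\nu_n(H^F)=\Tr(S^F\circ P^F_{n-1})$ is the ordinary trace of an explicit endomorphism of the common underlying space $H$, this is a comparison of two linear maps. Multiplication, hence each $m^{(k)}$, is unchanged, while $\Delta^F(h)=F\Delta(h)F\inv$ iterates to $(\Delta^F)^{(k)}(h)=F_k\,\Delta^{(k)}(h)\,F_k\inv$, where $F_1=1$ and $F_{k+1}=(1\o F_k)\,(\id\o\Delta^{(k)})(F)$; I would verify this recursion by induction. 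Consequently $P^F_{n-1}(h)=m^{(n-1)}(F_{n-1}\,\Delta^{(n-1)}(h)\,F_{n-1}\inv)$, and with $S^F(h)=\b_F S(h)\a_F$ from \eqref{eq:S_F} the endomorphism $S^F\circ P^F_{n-1}$ is expressed entirely through untwisted data together with the gauge factors $F_{n-1},\a_F,\b_F$.

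From here there are two routes. The direct one computes $\Tr(S^F\circ P^F_{n-1})$ by pushing $S$ through $m^{(n-1)}$ (which reverses the factors and applies $S$ to each leg), then using cyclicity of the trace to bring $\a_F$ and $\b_F$ together and \eqref{eq:ab2} to contract them, and finally the cocycle condition \eqref{eq:pF} to absorb the conjugators $F_{n-1}$. The second route feeds the twisted data into the integral formula: $\Lambda$ remains a left integral of $H^F$ since the algebra and the counit are unchanged, so it remains to determine a right integral $\lambda_F$ of $H^F$ with $\lambda_F(\Lambda)=1$ in terms of $\lambda$ and $F$, and then to check $\lambda_F(S^F(P^F_n(\Lambda)))=\lambda(S(\Lambda^{[n]}))$.

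Both routes collapse onto the same difficulty, which I expect to be the technical heart of the proof: showing that the gauge contributions — the conjugators $F_{n-1}$ and the factors $\a_F,\b_F$ — cancel inside the trace. This is precisely the point where $F$ must be a genuine 2-cocycle rather than an arbitrary gauge transformation (compare the remark after \eqref{eq:S_F} that \eqref{eq:ab2} fails otherwise), so \eqref{eq:pF} must enter in an essential way; the bookkeeping of the $n-1$ tensor-legs of $F_{n-1}$ as they are permuted by the antipode and then collapsed by $m^{(n-1)}$ is where the real work lies. Even $n=2$, namely gauge invariance of $\Tr(S)$, is already nontrivial — there $F_1=1$ and one must show $\Tr(h\mapsto\b_F S(h)\a_F)=\Tr(S)$ — which indicates that \eqref{eq:ab2} by itself will not suffice and that the cocycle identity is indispensable.
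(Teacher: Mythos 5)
Your first half is correct and coincides with the paper's argument: apply Radford's trace formula $\Tr(f)=\lambda(S(\Lambda_2)f(\Lambda_1))$ with $f=S\circ P_{n-1}$, use anti-multiplicativity of $S$, and observe that coassociativity gives $P_{n-1}(\Lambda_1)\Lambda_2=\Lambda^{[n]}$. No issue there.

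The second half has a genuine gap. You set up the correct framework --- reduction to $H'=H^F$, the recursion $F_{k+1}=(1\o F_k)(\id\o\Delta^{(k)})(F)$ and the conjugation formula $\Delta_F^{(k)}(h)=F_k\Delta^{(k)}(h)F_k\inv$, which is exactly the paper's Lemma \ref{l:F_n} --- but you then explicitly defer the cancellation of the gauge factors as ``where the real work lies'' without carrying it out, and the tools you name would not finish it. Cyclicity of the trace is not the right instrument: once $S$ is pushed through $m^{(n-1)}$, the legs of $F_{n-1}$ and $F_{n-1}\inv$ sit \emph{between} the factors being multiplied rather than at the two ends of a composition of operators on $H$, so there is nothing to cycle. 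The paper's actual mechanism has three ingredients you do not supply: (a) it applies the \emph{untwisted} Radford trace formula to the twisted operator $S^F\circ P_n^F$ (so no twisted integral $\lambda_F$ is ever needed, unlike in your second route); (b) it uses Radford's exchange identities $a\Lambda_1\o\Lambda_2=\Lambda_1\o S\inv(a)\Lambda_2$, $\Lambda_1a\o\Lambda_2=\Lambda_1\o\Lambda_2S(a\leftharpoonup\a)$ and $\lambda(ab)=\lambda(S^2(b\leftharpoonup\a)a)$ --- involving the distinguished group-like $\a\in H^*$ --- to reposition the legs of $F$, $F\inv$, $\a_F$, $\b_F$ around $\Lambda_1$, arriving at $\lambda\bigl(S\bigl(\sum_{i,j}d_jP_n^F(f_i\Lambda_1e_j)g_i\Lambda_2\bigr)\bigr)$; and (c) it identifies that inner sum with $\Lambda^{[n+1]}$ via Lemma \ref{l:SweedlerPower}, whose engine is the absorption identity $m^{(k)}\left((1\o Z)(x_1\o\cdots\o x_k)(Z\inv\o1)\right)=x_1\cdots x_k$ for invertible $Z$. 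It is step (c), using only invertibility of $F_{n+1}$, that absorbs the conjugators; the cocycle condition \eqref{eq:pF} is consumed earlier, in proving the two-sided recursion \eqref{eq:F_n} and the relations \eqref{eq:ab2}. Without (b) and (c) the computation does not close, so as written the invariance claim is not proved.
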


To prove the theorem, we need to establish a relationship between $\Delta_F^{(n)}$ and $\Delta^{(n)}$ for a given 2-cocycle $F$ of $H$.  Let us define
$$
F_1=1_H,\quad \text{and}\quad F_{n+1} = (1 \o F_n)(\id \o \Delta^{(n)})(F)
$$
for all integers $n \ge 1$. In particular, $F_2 = (1 \o F_1)(\id \o \Delta^{(1)})(F) =F$.
\begin{lem} \label{l:F_n}
Let $F$ be a 2-cocycle of a finite-dimensional Hopf algebra over $\k$. Then the following equations hold for all positive integers $n$:
\begin{equation}\label{eq:F_n}
  F_{n+1} = (F_n \o 1)(\Delta^{(n)} \o \id)(F),
\end{equation}
\begin{equation} \label{eq:Delta_F^n}
  F_n \Delta^{(n)}(h) = \Delta_F^{(n)}(h) F_n \quad \text{for all } h \in H\,.
\end{equation}
\end{lem}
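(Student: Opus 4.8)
The plan is to prove both identities by induction on $n$, driven by the reformulation of the 2-cocycle condition \eqref{eq:pF} as
\[
(1\o F)(\id\o\Delta)(F)=(F\o 1)(\Delta\o\id)(F),
\]
which one obtains from \eqref{eq:pF} by multiplying on the right by $(F\o1)(\Delta\o\id)(F)$ and cancelling. The two other ingredients are the two presentations $\Delta^{(n)}=(\id\o\Delta^{(n-1)})\circ\Delta=(\Delta^{(n-1)}\o\id)\circ\Delta$ of coassociativity, and the fact that any tensor-factor extension of an iterated coproduct, such as $\id\o\Delta^{(n)}$ or $\id\o\Delta^{(n-1)}\o\id$, is an algebra homomorphism, so that it distributes across products of elements of $H^{\o k}$.

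For \eqref{eq:F_n} the base case $n=1$ is immediate from $F_1=1$ and $\Delta^{(1)}=\id$. For the step I start from the defining recursion $F_{n+1}=(1\o F_n)(\id\o\Delta^{(n)})(F)$ and rewrite each of its two factors. In $1\o F_n$ I substitute the induction hypothesis $F_n=(F_{n-1}\o1)(\Delta^{(n-1)}\o\id)(F)$ and use $1\o(\Delta^{(n-1)}\o\id)(F)=(\id\o\Delta^{(n-1)}\o\id)(1\o F)$; in $(\id\o\Delta^{(n)})(F)$ I apply the presentation $\Delta^{(n)}=(\Delta^{(n-1)}\o\id)\circ\Delta$ to get $(\id\o\Delta^{(n-1)}\o\id)\bigl((\id\o\Delta)(F)\bigr)$. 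Because $\id\o\Delta^{(n-1)}\o\id$ is an algebra map, the two central factors coalesce into $(\id\o\Delta^{(n-1)}\o\id)\bigl[(1\o F)(\id\o\Delta)(F)\bigr]$, and here I apply the reformulated cocycle identity exactly once to replace the bracket by $(F\o1)(\Delta\o\id)(F)$. Distributing $\id\o\Delta^{(n-1)}\o\id$ again, the right-hand factor becomes $(\Delta^{(n)}\o\id)(F)$ via the other presentation $\Delta^{(n)}=(\id\o\Delta^{(n-1)})\circ\Delta$, while the surviving left factors are $(1\o F_{n-1}\o1)\bigl[(\id\o\Delta^{(n-1)})(F)\o1\bigr]$, whose slots $1,\dots,n$ recombine, by the very definition of $F_n$, into $F_n$; thus the whole expression collapses to $(F_n\o1)(\Delta^{(n)}\o\id)(F)$, as required.

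For \eqref{eq:Delta_F^n} I again induct on $n$, the case $n=1$ being trivial. In the step I write $F_{n+1}\Delta^{(n+1)}(h)=(1\o F_n)(\id\o\Delta^{(n)})(F)\,(\id\o\Delta^{(n)})(\Delta(h))$ using $\Delta^{(n+1)}=(\id\o\Delta^{(n)})\circ\Delta$, and fuse the two applications of the algebra map $\id\o\Delta^{(n)}$ to get $(1\o F_n)(\id\o\Delta^{(n)})\bigl(F\Delta(h)\bigr)$. The identity $F\Delta(h)=\Delta_F(h)F$ is immediate from the definition $\Delta_F(h)=F\Delta(h)F\inv$; writing $\Delta_F(h)=\sum h^F_1\o h^F_2$ and separating off the trailing $(\id\o\Delta^{(n)})(F)$, the remaining term is $\sum h^F_1\o F_n\Delta^{(n)}(h^F_2)$, to whose second leg the induction hypothesis applies, yielding $\sum h^F_1\o\Delta_F^{(n)}(h^F_2)\,F_n=\Delta_F^{(n+1)}(h)\,(1\o F_n)$ after recognizing $(\id\o\Delta_F^{(n)})\circ\Delta_F=\Delta_F^{(n+1)}$. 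Reassembling the trailing factors back into $F_{n+1}$ completes the step.

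The conceptual content is small; the real work, and the only place where I expect to have to be careful, is the index bookkeeping in \eqref{eq:F_n}: one must choose, at each stage, which of the two coassociativity presentations to apply to which tensor leg, and where to place the identity elements, so that after a single use of the cocycle identity the leftover factors regroup exactly into $F_n\o1$ through the definition of $F_n$. By contrast \eqref{eq:Delta_F^n} is essentially formal once these conventions are fixed, since it merely transports the single intertwining relation $F\Delta=\Delta_F(\,\cdot\,)F$ through the iterated coproduct.
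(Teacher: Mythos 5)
Your proposal is correct and follows essentially the same route as the paper: both arguments hinge on rewriting \eqref{eq:pF} as $(1\o F)(\id\o\Delta)(F)=(F\o1)(\Delta\o\id)(F)$, pushing it through $\id\o\Delta^{(n-1)}\o\id$ to convert the defining recursion for $F_{n+1}$ into \eqref{eq:F_n}, and transporting $F\Delta(h)=\Delta_F(h)F$ through the iterated coproduct to get \eqref{eq:Delta_F^n}. The only differences are cosmetic (you run the two inductions separately and apply the intertwining relation before, rather than after, distributing $\id\o\Delta^{(n)}$, thereby bypassing the paper's auxiliary identity $(\id\o\Delta_F^{(n)})(F)(1\o F_n)=F_{n+1}$), so no further comment is needed.
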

\begin{proof}
  Both equations obviously hold for $n=1,2$, and we proceed to prove the equalities by induction on $n$.
  Assume the induction hypothesis \eqref{eq:Delta_F^n}.
  Let $F = \sum_i f_i \ot g_i$.
  Then, for $h \in H$,
  $$
  \sum_{(h), i} f_i h_1 \o g_i h_2 = \Delta_F(h)F
  $$
  and
  \begin{multline*}
   (\id \ot \Delta_F^{(n)})(F)(1 \ot F_n) = \sum_i f_i \o \Delta_F^{(n)}(g_i)F_n  \\ =
   \sum_i f_i \o F_n \Delta^{(n)}(g_i) = (1 \ot F_n) (\id \o \Delta^{(n)})(F) =F_{n+1}\,.
  \end{multline*}

  Thus,
  \begin{multline*}
    F_{n+1} \Delta^{(n+1)}(h) = \sum_{(h), i}(1 \o F_n )(f_i h_1 \ot \Delta^{(n)}(g_i h_2)) \\
    = \sum_{(h), i}(f_i h_1 \ot \Delta_F^{(n)}(g_i h_2))(1 \o F_n )
    = \left((\id \ot \Delta_F^{(n)})(\Delta_F(h)F)\right)(1 \o F_n ) \\
     = \Delta_F^{(n+1)}(h)(\id \ot \Delta_F^{(n)})(F)(1 \o F_n )=
      \Delta_F^{(n+1)}(h) F_{n+1}\,.
  \end{multline*}
 Using the induction assumption, we find
  \begin{multline*}
   F_{n+2} = (1 \o F_{n+1}) (\id \o \Delta^{(n+1)})(F) \\
   =   (1 \o F_n \o  1)\left((\id \o \Delta^{(n)}\o \id)(1 \o F)\right) (\id \o \Delta^{(n+1)})(F)\\
   =   (1 \o F_n \o  1)\left((\id \o \Delta^{(n)}\o \id)(1 \o F)\right) \left((\id \o  \Delta^{(n)} \o \id)(\id \o \Delta)(F)\right)\\
   =   (1 \o F_n \o  1)(\id \o \Delta^{(n)}\o \id)((1 \o F) (\id \o \Delta)(F))\,.
  \end{multline*}
  Here, the third equality is a consequence of the coassociativity of $\Delta$, and the last equality follows from the fact that $\Delta$ is an algebra map. By \eqref{eq:pF}, we have
  \begin{multline*}
   F_{n+2} = (1 \o F_n \o  1)(\id \o \Delta^{(n)}\o \id)((F \o 1) (\Delta\o \id)(F))\\
   = (F_{n+1} \o 1)(\id \o \Delta^{(n)}\o \id)(\Delta\o \id)(F)
   = (F_{n+1} \o 1)(\Delta^{(n+1)}\o \id)(F) \,.\qedhere
  \end{multline*}
\end{proof}

\begin{remark}
Lemma \ref{l:F_n} can also be obtained by considering the proof of a coherence result of Epstein \cite{E}. For the sake of completeness, a direct algebraic proof is provided for the lemma.
\end{remark}

\begin{lem}\label{l:SweedlerPower}
  Let $H$ be a finite-dimensional Hopf algebra over $\k$ and $F$ a 2-cocycle of $H$.  Then for any positive integer $n$ and $h \in H$, we have
  $$
  h^{[n+1]} =  m^{(n+1)}\left((\id \o \Delta_F^{(n)} \o \id)  \left((1\o F)(1 \o \Delta(h))(F\inv \o 1)\right)\right)
  $$
  where $m^{(n+1)}$ denotes the multiplication of $H$.
\end{lem}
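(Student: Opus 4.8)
The plan is to eliminate the twisted iterated comultiplication $\Delta_F^{(n)}$ on the right-hand side in favour of the untwisted $\Delta^{(n)}$, and then to absorb every resulting copy of $F^{\pm1}$ into the elements $F_{n+1}^{\pm 1}$ by means of the two descriptions of $F_{n+1}$ furnished by Lemma~\ref{l:F_n}. Concretely, \eqref{eq:Delta_F^n} lets me write $\Delta_F^{(n)}(x)=F_n\,\Delta^{(n)}(x)\,F_n\inv$, and since both $\id\o\Delta_F^{(n)}\o\id$ and conjugation-after-$(\id\o\Delta^{(n)}\o\id)$ are algebra maps that agree on every simple tensor $a\o b\o c$, I obtain
\[
(\id\o\Delta_F^{(n)}\o\id)(\xi)=(1\o F_n\o 1)\,(\id\o\Delta^{(n)}\o\id)(\xi)\,(1\o F_n\inv\o 1)
\]
for all $\xi\in H^{\o 3}$. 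I would apply this with $\xi=(1\o F)(1\o\Delta(h))(F\inv\o 1)$.

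Since $\id\o\Delta^{(n)}\o\id$ is an algebra homomorphism, I would expand $(\id\o\Delta^{(n)}\o\id)(\xi)$ as the product of the images of the three factors of $\xi$. The middle factor gives $1\o\Delta^{(n+1)}(h)$, using $(\Delta^{(n)}\o\id)\circ\Delta=\Delta^{(n+1)}$. The left conjugating factor $1\o F_n\o 1$ then merges with the image of $1\o F$: by the description $F_{n+1}=(F_n\o 1)(\Delta^{(n)}\o\id)(F)$ in \eqref{eq:F_n} this collapses to $1\o F_{n+1}$. Symmetrically, the image of $F\inv\o 1$ merges with the right conjugating factor $1\o F_n\inv\o 1$: from the defining recursion $F_{n+1}=(1\o F_n)(\id\o\Delta^{(n)})(F)$ one gets $F_{n+1}\inv=(\id\o\Delta^{(n)})(F\inv)(1\o F_n\inv)$, so this collapses to $F_{n+1}\inv\o 1$. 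At this stage the argument of the outer multiplication has become $(1\o F_{n+1})(1\o\Delta^{(n+1)}(h))(F_{n+1}\inv\o 1)$.

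It remains to multiply out all tensor factors of this last expression. Writing $\Delta^{(n+1)}(h)=\sum h^{(1)}\o\cdots\o h^{(n+1)}$ and, for each such term, introducing the \emph{linear} map $\phi\colon H^{\o(n+1)}\to H$ with $\phi(x_1\o\cdots\o x_{n+1})=x_1 h^{(1)}x_2 h^{(2)}\cdots x_{n+1}h^{(n+1)}$, one checks that carrying out the multiplication amounts to evaluating $\phi$ on $F_{n+1}\inv F_{n+1}$. Here lies the one genuine subtlety: because the multiplication of $H$ is \emph{not} an algebra map on tensor powers, the cancellation of $F_{n+1}$ against $F_{n+1}\inv$ cannot be done factor-by-factor in advance; it must be effected only after multiplying, through the linearity of $\phi$. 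Since $F_{n+1}\inv F_{n+1}=1^{\o(n+1)}$, linearity gives $\phi(1^{\o(n+1)})=h^{(1)}h^{(2)}\cdots h^{(n+1)}$, and summing over the terms of $\Delta^{(n+1)}(h)$ yields $m^{(n+1)}(\Delta^{(n+1)}(h))=h^{[n+1]}$, as required. The base case $n=1$ (where $\Delta_F^{(1)}=\id$) is exactly this same cancellation applied directly to $(1\o F)(1\o\Delta(h))(F\inv\o 1)$, which is a convenient sanity check of the whole scheme.
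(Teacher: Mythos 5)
Your proof is correct and follows essentially the same route as the paper: conjugate by $1\o F_n\o 1$ via \eqref{eq:Delta_F^n}, collapse the cocycle factors into $(1\o F_{n+1})(1\o\Delta^{(n+1)}(h))(F_{n+1}\inv\o 1)$ using the two descriptions of $F_{n+1}$ from Lemma \ref{l:F_n}, and then cancel the sandwich under the total multiplication. The only difference is that you prove the final cancellation step inline via the linearity of your map $\phi$, whereas the paper simply cites it as \cite[Lemma 4.4]{NS08}.
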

\begin{proof}
By Lemma \ref{l:F_n} and the coassociativity of $\Delta$, we find
\begin{eqnarray*}
& &(\id \o \Delta_F^{(n)} \o \id)  \left((1\o F)(1 \o \Delta(h))(F\inv \o 1)\right) \\
& = &
(1 \o F_n \o 1)\left((\id \o \Delta^{(n)} \o \id)  \left((1\o F)(1 \o \Delta(h))(F\inv \o 1)\right) \right)(1 \o F_n\inv \o 1) \\
&=& (1 \o F_{n+1})(1 \o \Delta^{(n+1)}(h))(F_{n+1}\inv  \o 1)\,.
\end{eqnarray*}
Note that if $Z \in H^{\o (n+1)}$ is invertible, then
$$
m^{(n+2)}\left((1 \o Z)(x_1 \o \cdots \o x_{n+2})(Z\inv \o 1)\right) =  x_1 \cdots x_{n+2}
$$
for all $x_1, \dots, x_{n+2}\in H$ (cf. \cite[Lemma 4.4]{NS08}). Therefore,
$$
m^{(n+1)}\left((1 \o F_{n+1})(1 \o \Delta^{(n+1)}(h))(F_{n+1}\inv  \o 1)\right) = m^{(n+1)} \circ \Delta^{(n+1)}(h) =h^{[n+1]}
$$
and so the result follows.
\end{proof}
Now, we can prove the main result of this section.
\begin{proof}[Proof of Theorem \ref{t:nu_n}]
Let $H$ be a Hopf algebra over a field $\k$ with antipode $S$. Suppose $\Lambda$ is a left  integral of $H$, and $\lambda$ a right integral of $H^*$ such that $\lambda(\Lambda)=1$. By  Radford's trace formulae (cf. \cite{Rad94}),
$$
\Tr(f) = \lambda(S(\Lambda_2)f(\Lambda_1)) \quad \text{for any $\k$-linear operator $f$ on $H$},
$$
where $\Delta(\Lambda)=\Lambda_1 \o \Lambda_2$ is the Sweedler notation with the summation suppressed.  Thus, we have
$$
\nu_n(H)= \lambda(S(\Lambda_2)S(\Lambda_1^{[n-1]})) = \lambda(S(\Lambda_1^{[n-1]}\Lambda_2))=
\lambda(S(\Lambda^{[n]}))\,.
$$
 Now let $F=\sum_i f_i \o g_i$ be a 2-cocycle of $H$ and $F\inv=\sum_j d_j \o e_j$. Then, it follows by  \eqref{eq:ab2}, \eqref{eq:ab1}  that  $u=\b_F=\sum_i f_i S(g_i)$ is invertible with $u\inv=\a_F=\sum_i S(d_i) e_i$. Moreover, by \eqref{eq:S_F},  the antipode $S^F$ of $H^F$ is given by
 $$
 S^F(h) = uS(h)u\inv\,.
 $$
 Let $P_n^F(h)$ denote the $n$-th Sweedler power of an element $h \in H^F$. Then, for $n \ge 1$,
 \begin{multline*}
   \nu_{n+1}(H^F) = \lambda(S(\Lambda_2)S^F \circ P_n^F(\Lambda_1)) \\
     =\lambda(S(\Lambda_2)u S(P_n^F(\Lambda_1))u\inv)
      =\sum_{i,j}\lambda(S(\Lambda_2) f_iS(g_i) S(P_n^F(\Lambda_1))S(d_j)e_j) \\
 = \sum_{i,j}\lambda(S(S\inv(f_i)\Lambda_2) S(d_j P_n^F(\Lambda_1)g_i) e_j)\,.
 \end{multline*}
Recall from \cite{Rad94} that
$$
a \Lambda_1 \o \Lambda_2 =  \Lambda_1 \o S\inv(a)\Lambda_2, \quad  \Lambda_1a \o \Lambda_2 =
\Lambda_1 \o \Lambda_2 S(a\leftharpoonup \a)
$$
$$
\text{and}\qquad \lambda(ab)=\lambda(S^2(b\leftharpoonup\a)a)\quad\text{for all }a, b \in H
$$
where $\a$ is the distinguished group-like element in $H^*$ defined by $\Lambda a = \Lambda \a(a)$.
Using these properties, we have
\begin{multline*}
\nu_{n+1}(H^F)= \sum_{i,j}\lambda(S(\Lambda_2) S(d_j P_n^F(f_i\Lambda_1)g_i) e_j)  \\ =
\sum_{i,j}\lambda(S^2(e_j \leftharpoonup \a) S(\Lambda_2) S(d_j P_n^F(f_i\Lambda_1)g_i) e_j) \\
= \sum_{i,j}\lambda(S(\Lambda_2 S(e_j \leftharpoonup \a)) S(d_j P_n^F(f_i\Lambda_1)g_i)) \\
= \sum_{i,j}\lambda(S(\Lambda_2) S(d_j P_n^F(f_i\Lambda_1 e_j)g_i)) \\
= \sum_{i,j}\lambda(S(d_j P_n^F(f_i\Lambda_1 e_j)g_i \Lambda_2)) \,.
\end{multline*}
Notice that
$$
\sum_{ij} d_j P_n^F(f_i\Lambda_1 e_j)g_i \Lambda_2 =
m\left((\id \o \Delta_F^{(n)} \o \id)  \left((1\o F)(1 \o \Delta(\Lambda))(F\inv \o 1)\right)\right)\,.
$$
Hence, by Lemma \ref{l:SweedlerPower}, the last expression is equal to $\Lambda^{[n+1]}$. Therefore,
$$
\nu_{n+1}(H^F) = \lambda(S(\Lambda^{[n+1]}))\,.
$$

If $H'$ is a Hopf algebra which is gauge equivalent to $H$, then there exists a 2-cocycle $F$ of $H$ such that $H'\stackrel{\sigma}{\cong} H^F$ as bialgebras. Let $S'$ and $P_n'$ be the antipode and the $n$-th Sweedler power map of $H'$ respectively. Then
$$
\sigma\circ S' \circ P_n' = S^F \circ P^F_n \circ \sigma\,.
$$
Therefore,
$$
\nu_n(H') = \nu_n(H^F) = \nu_n(H)
$$
for all positive integer $n$.
\end{proof}

 Suppose $\lambda \in H^*$ is a right integral and $\Lambda \in H$ is a left integral such that $\lambda(\Lambda)=1$. Then $\lambda_\ell=\lambda \circ S$ is a left integral of $H^*$ and
 $$
 \lambda_\ell(\Lambda)=\lambda(S (\Lambda)) = \lambda(\Lambda)=1 \quad \text{(cf. \cite{Rad94})}.
 $$
 Therefore, $\nu_n(H) = \lambda_\ell( \Lambda^{[n]})$.

 On the other hand, $\Lambda_r = S(\Lambda)$ is a right integral of $H$, and we obviously have
 $$
 \lambda(\Lambda_r)=1, \quad \Lambda_r^{[n]} = S(\Lambda^{[n]})\,.
 $$
 Thus, $\nu_n(H) = \lambda(\Lambda_r^{[n]})$. We summarize this conclusion in the following corollary.
 \begin{cor} \label{c:4.6}
   Let $H$ be a finite-dimensional Hopf algebra over $\k$. Suppose $\lambda \in H^*$ and $\Lambda \in H$ are both left integrals (or both right integrals) such that $\lambda(\Lambda)=1$. Then
   $$
   \nu_n(H) = \lambda(\Lambda^{[n]})
   $$
   for all positive integer $n$. \qed
 \end{cor}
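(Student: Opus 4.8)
The plan is to deduce the corollary directly from Theorem~\ref{t:nu_n}, which already supplies the formula $\nu_n(H)=\lambda(S(\Lambda^{[n]}))$ in the \emph{mixed} situation where $\lambda\in H^*$ is a right integral, $\Lambda\in H$ is a left integral, and $\lambda(\Lambda)=1$. To reach the two matching cases one simply transports one of the two integrals through the antipode, exploiting that $S$ interchanges left and right integrals on both $H$ and $H^*$, and that it commutes with the Sweedler power maps.

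For the \emph{both-left} case, starting from such a mixed pair $(\lambda,\Lambda)$, I would set $\lambda_\ell=\lambda\circ S$. Since $S$ is an anti-coalgebra map, $\lambda_\ell$ is a left integral of $H^*$, and the normalization is preserved because $\lambda_\ell(\Lambda)=\lambda(S(\Lambda))=\lambda(\Lambda)=1$; here $\lambda(S(\Lambda))=\lambda(\Lambda)$ is Radford's identity (cf.~\cite{Rad94}). Rewriting the formula of Theorem~\ref{t:nu_n} then gives $\nu_n(H)=\lambda(S(\Lambda^{[n]}))=(\lambda\circ S)(\Lambda^{[n]})=\lambda_\ell(\Lambda^{[n]})$, which is the desired expression for the \emph{specific} pair $(\lambda_\ell,\Lambda)$ of left integrals normalized by $\lambda_\ell(\Lambda)=1$.

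For the \emph{both-right} case, I would instead replace $\Lambda$ by $\Lambda_r=S(\Lambda)$, which is a right integral of $H$ with $\lambda(\Lambda_r)=\lambda(S(\Lambda))=1$. The point to verify is the identity $\Lambda_r^{[n]}=S(\Lambda^{[n]})$, i.e. that $S$ commutes with $P_n$. This is the only computational step: iterating $\Delta\circ S=(S\o S)\circ\Delta^{\mathrm{op}}$ yields $\Delta^{(n)}\circ S=(S^{\o n})\circ\rho\circ\Delta^{(n)}$, where $\rho$ reverses the $n$ tensor factors, and since $S$ is anti-multiplicative one has $m^{(n)}\circ S^{\o n}\circ\rho=S\circ m^{(n)}$. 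Composing gives $P_n\circ S=S\circ P_n$, hence $\Lambda_r^{[n]}=S(\Lambda)^{[n]}=S(\Lambda^{[n]})$, and Theorem~\ref{t:nu_n} becomes $\nu_n(H)=\lambda(S(\Lambda^{[n]}))=\lambda(\Lambda_r^{[n]})$, the formula for the pair $(\lambda,\Lambda_r)$ of right integrals.

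Finally, to upgrade these two statements from the particular normalized pairs just constructed to \emph{arbitrary} normalized pairs of like-type integrals, I would use that the spaces of left (resp. right) integrals in $H$ and in $H^*$ are one-dimensional, so any other admissible pair has the form $(c'\lambda_\ell,c\Lambda)$ with $c'c=1$, together with the linearity of $P_n$ (being a composite of the linear maps $m^{(n)}$ and $\Delta^{(n)}$), which gives $\lambda'((c\Lambda)^{[n]})=c'c\,\lambda_\ell(\Lambda^{[n]})=\nu_n(H)$. The only genuinely delicate ingredients are Radford's normalization identity $\lambda(S(\Lambda))=\lambda(\Lambda)$ and the commutation $S\circ P_n=P_n\circ S$; everything else is bookkeeping, and I expect the latter commutation to be the main point to get right.
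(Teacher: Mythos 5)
Your proposal is correct and follows essentially the same route as the paper: the paper's argument (the paragraph preceding the corollary) likewise passes from the mixed pair of Theorem~\ref{t:nu_n} to the both-left case via $\lambda_\ell=\lambda\circ S$ with $\lambda_\ell(\Lambda)=\lambda(S(\Lambda))=\lambda(\Lambda)=1$, and to the both-right case via $\Lambda_r=S(\Lambda)$ with $\Lambda_r^{[n]}=S(\Lambda^{[n]})$. You merely make explicit two points the paper treats as obvious, namely the commutation $S\circ P_n=P_n\circ S$ and the reduction to arbitrary normalized pairs via one-dimensionality of the integral spaces.
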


We note, however, that $\nu_n(H)$ is not preserved under \emph{twisting} by more general pseudo-cocycles of $H$. Nikshych \cite{Ni} shows
that the group algebras $\BC Q$ and $\BC D_8$, where $Q$ is the quaternion group and $D_8$ is the
dihedral group of order 8, are twists of each other by a pseudo-cocycle. However neither the
indicators nor $\Tr(S)$ are the same for the two groups.

Recall that a sequence $\{\b_n\}_{n \in \BN}$ in $\k$ is said to be \emph{linearly recursive} if it satisfies a non-zero polynomial $f(x)=f_0+f_1x+\cdots+ f_{m-1}x^{m-1} + f_m x^m \in \k[x]$, i.e.
$$
f_0 a_n+f_1 a_{n+1}+\cdots+ f_{m-1}a_{n+m-1} + f_m a_{n+m} =0 \quad \text{for all } n \in \BN.
$$
The monic polynomial of the least degree satisfied by a linearly recursive sequence is called the \emph{minimal polynomial} of the sequence.

For the case of a finite group $G$,
$$
\nu_n(\BC G) =\#\{g \in G\mid g^n=1\}\,.
$$
Thus, the sequence $\nu_n(\BC G)$ is periodic, and hence linearly recursive as it satisfies the polynomial $x^N-1$ where $N$ is the exponent of $G$. More generally, for any semisimple Hopf algebra $H$ over $\BC$, the sequence $\{\nu_n(H)\}_{n \in \BN}$ is periodic whose period is equal to the exponent of $H$ (cf.  \cite[Proposition 5.3]{NS07a}). However, the example in the following section implies that the sequence of higher indicators $\{\nu_n(H)\}_{n \in \BN}$ is not periodic for an arbitrary finite-dimensional Hopf algebra $H$. Nevertheless, by the following proposition, the sequence is always linearly recursive.

\begin{prop}\label{p:recursive}
Let $H$ be a finite-dimensional Hopf algebra over any field $\k$. Then the sequence $\{\nu_n(H)\}$ is linearly recursive and the degree of its minimal polynomial is at most $(\dim H)^2$.
Also, the minimal polynomial $p_H$ of the sequence of higher indicators is also a gauge invariant.
\end{prop}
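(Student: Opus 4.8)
The proposition has two parts: (i) the sequence $\{\nu_n(H)\}$ is linearly recursive with minimal polynomial of degree at most $(\dim H)^2$, and (ii) the minimal polynomial $p_H$ is a gauge invariant. For part (i) the key observation is that $\nu_n(H) = \Tr(S \circ P_{n-1})$, so everything reduces to understanding the Sweedler power maps $P_{n-1}$ as linear operators on the finite-dimensional space $H$. I would first express $P_n$ in a way that exhibits its dependence on $n$ as a power of a single fixed operator. Writing $P_n = m^{(n)} \circ \Delta^{(n)}$, one checks by coassociativity and associativity that $P_n$ can be realized through repeated application of a fixed endomorphism of a larger fixed space. Concretely, consider the linear operator $T$ on $\End_\k(H) \cong H \o H^*$ (or on $H \o H$) given by $X \mapsto m \circ (\id \o X) \circ \Delta$ (a "convolution-type" iteration); then $P_n = T^{n-1}(\id_H)$ in an appropriate sense, so $P_n$ lies in the $T$-cyclic subspace generated by $\id_H$ inside a space of dimension $(\dim H)^2$.

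Once $P_{n-1}$ is exhibited as $T^{\,n-1}$ applied to a fixed vector in a space $W$ of dimension at most $(\dim H)^2$, linear recursiveness is immediate: by the Cayley--Hamilton theorem $T$ satisfies its characteristic polynomial $\chi_T(x)$ of degree $\dim W \le (\dim H)^2$, so the vectors $P_0, P_1, P_2, \dots$ satisfy the same polynomial recursion, $\sum_{k} c_k\, P_{n+k} = 0$ where $\chi_T(x) = \sum_k c_k x^k$. Applying the fixed linear functional $X \mapsto \Tr(S \circ X)$ then shows $\sum_k c_k\, \nu_{n+k+1}(H) = 0$ for all $n$, i.e. the scalar sequence $\{\nu_n(H)\}$ is annihilated by $\chi_T(x)$ (up to the index shift). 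Hence the sequence is linearly recursive and its minimal polynomial, dividing $\chi_T$, has degree at most $(\dim H)^2$.

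For part (ii), the invariance of $p_H$, I would argue abstractly from Theorem \ref{t:nu_n} rather than by re-examining $T$. That theorem gives $\nu_n(H) = \nu_n(H')$ for all $n$ whenever $H$ and $H'$ are gauge equivalent; hence the two sequences $\{\nu_n(H)\}$ and $\{\nu_n(H')\}$ are literally equal as sequences in $\k$. The minimal polynomial of a linearly recursive sequence is determined entirely by the sequence itself --- it is the monic generator of the ideal of polynomials annihilating the sequence --- so equal sequences have equal minimal polynomials. Therefore $p_H = p_{H'}$, which is exactly the assertion that $p_H$ is a gauge invariant. This step is essentially formal once part (i) guarantees that a minimal polynomial exists.

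**The main obstacle.** The delicate part is part (i): correctly identifying a single fixed operator $T$ on a fixed finite-dimensional space so that $P_{n-1}$ genuinely appears as $T^{n-1}$ (or $T^{n-1}$ applied to a fixed vector), and verifying by the coassociativity/associativity bookkeeping that the iteration matches the definition $P_n = m^{(n)} \circ \Delta^{(n)}$. One must be careful that the recursion $\Delta^{(n+1)} = (\id \o \Delta^{(n)}) \circ \Delta$ and its multiplicative counterpart combine into a clean one-step recursion for the operators $P_n$ themselves, so that the whole family lands in a $T$-invariant subspace of dimension $\le (\dim H)^2$. Getting this encoding right --- and confirming the dimension bound is $(\dim H)^2$ rather than something larger --- is where the real work lies; the recursiveness and the invariance then follow by Cayley--Hamilton and Theorem \ref{t:nu_n} respectively.
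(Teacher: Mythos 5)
Your argument is correct and is essentially the paper's: both rest on the observation that $P_n$ is the $n$-th convolution power of $\id_H$ in the finite-dimensional algebra $\Hom_\k(H,H)$, so the $P_n$ satisfy a monic linear recursion of order at most $(\dim H)^2$ (you phrase this via Cayley--Hamilton for left convolution by $\id_H$, the paper via a direct linear dependence among the $P_n$ propagated by $P_n * P_m = P_{n+m}$), and applying the functional $\Tr(S\circ -)$ transfers the recursion to $\{\nu_n(H)\}$. The gauge invariance of $p_H$ is, as you say, immediate from Theorem \ref{t:nu_n} since equal sequences have equal minimal polynomials.
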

\begin{proof}
  Since $\Hom_\k(H, H)$ is of finite dimension, the set of operators $\{P_n\}_{n\ge 0}$ is $\k$-linearly dependent. There exist a positive integer $N \le (\dim H)^2$ and scalars
  $\a_0, \dots, \a_{N-1} \in \k$ such that
  \begin{equation}\label{eq:relation1}
   \a_0 P_0+\cdots \a_{N-1} P_{N-1} +P_N =0.
  \end{equation}
  Recall that $P_n$ is the $n$-th power of $\id_H$ under the convolution product $*$ of $\Hom_\k(H, H)$. Therefore, $P_n*P_m =P_{m+n}$ for any non-negative integers $m,n$. Multiply  Equation \eqref{eq:relation1} by $P_{n-1}$. We find
  \begin{equation}\label{eq:relation2}
   \a_0 P_{n-1}+\cdots \a_{n-2+N} P_{n-2+N} +P_{n-1+N} =0
  \end{equation}
for all positive integers $n$. Apply $S$ to this equation and take trace. We have
$$
\a_0\nu_n(H)+\cdots \a_{n+N-1} \nu_{n-1+N}(H) +\nu_{n+N}(H) =0\,.
$$
Hence, the sequence $\{\nu_n(H)\}$ is linearly recursive and it satisfies a monic polynomial of degree $\le N$.
\end{proof}

%------------------------------------------------------------------------------------------------------

\section{Indicators of the Taft algebras}\label{taft}
The Taft algebras are well-known to be non-semisimple. We will compute the 2nd indicators $\nu_2(T)$ of the Taft algebras $T$ as examples in this section. In particular, for the Taft algebra $T_4$ of dimension $4$, we have computed its complete sequence $\{\nu_n(T_4)\}_{n \in \BN}$ which is simply the sequence of positive integers

Let us begin with the definition of the Taft algebras over an algebraically closed field $\k$ of characteristic zero. For a primitive $n$-th root of unity $\om \in \k$, the Taft algebra $T_{n^2}(\om)$ is the $\k$-algebra generated by $g$ and $x$ subject to the relations $g^n=1$, $x^n=0$, and $xg=\om gx$. The Taft algebra is a Hopf algebra with the coalgebra structure and the antipode $S$ given by
$$
\Delta(g)=g\ot g, \quad \Delta(x)=x\ot 1 + g \ot x, \quad \eps(g)=1, \quad \eps(x)=0,
$$
$$
S(g)=g\inv, \quad S(x)=-g\inv x\,.
$$
The subset $\{g^ix^k\mid i,k=0, \dots, n-1\}$ of the Taft algebra $T_{n^2}(\om)$ forms a basis, and hence $\dim T_{n^2}(\om) = n^2$. By induction, one can write down the images of the antipode $S$ on this basis as
\begin{lem}\label{l:5.1} For $i,k = 0, \dots, n-1$, we have
  $$S(g^i x^k) = (-1)^k \om^{-\left(\frac{k(k-1)}{2}+ik\right)} g^{-(i+k)}x^k\,.$$
\end{lem}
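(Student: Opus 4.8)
The plan is to prove the formula $S(g^i x^k) = (-1)^k \om^{-\left(\frac{k(k-1)}{2}+ik\right)} g^{-(i+k)}x^k$ by induction on $k$, exploiting that $S$ is an algebra anti-homomorphism together with the generating relations of $T_{n^2}(\om)$. Since every basis element factors as $g^i x^k$, and $S$ reverses products, the first step is to record the two building blocks $S(g^i) = g^{-i}$ (immediate from $S(g) = g\inv$ and $S$ being multiplicative on the commuting powers of $g$) and $S(x) = -g\inv x$. From these, $S(g^i x^k) = S(x^k)\, S(g^i) = S(x)^k\, g^{-i}$, because $S$ is an anti-homomorphism and $x^k$ is a $k$-fold product of $x$. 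Thus the whole problem reduces to computing $S(x)^k = (-g\inv x)^k$.

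The core of the argument is therefore the claim
\begin{equation}\label{eq:xpower}
(-g\inv x)^k = (-1)^k \om^{-\frac{k(k-1)}{2}}\, g^{-k} x^k,
\end{equation}
which I would establish by induction on $k$. The case $k=0$ is trivial and $k=1$ is the definition. For the inductive step, I would multiply the hypothesis on the left by $-g\inv x$ and push the factor $g^{-k}$ past the single $x$ using the commutation relation. The only computational input needed is that $x g^{-k} = \om^{-k} g^{-k} x$, which follows by iterating $xg = \om gx$ (equivalently $xg\inv = \om\inv g\inv x$); applying this $k$ times to move $x$ across $g^{-k}$ produces exactly the factor $\om^{-k}$, and $-1 \cdot (-1)^k = (-1)^{k+1}$ together with $\om^{-\frac{k(k-1)}{2}}\cdot\om^{-k} = \om^{-\frac{k(k+1)}{2}}$ gives the desired exponent for $k+1$. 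This telescoping of the quadratic exponent $\frac{k(k-1)}{2}$ is the one place where care is required, but it is entirely routine.

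Finally I would assemble the pieces: substituting \eqref{eq:xpower} into $S(g^i x^k) = (-g\inv x)^k\, g^{-i}$ gives
$$
S(g^i x^k) = (-1)^k \om^{-\frac{k(k-1)}{2}}\, g^{-k} x^k g^{-i},
$$
and one last application of the commutation relation moves $g^{-i}$ leftward past $x^k$, contributing a factor $\om^{-ik}$ (since each of the $k$ copies of $x$ crossing $g^{-i}$ yields $\om^{-i}$), and combining $g^{-k}g^{-i} = g^{-(i+k)}$ yields the stated result. I do not expect any genuine obstacle here: the statement is a direct consequence of the defining relations, and the only subtlety is bookkeeping of the root-of-unity exponents, so the proof is essentially a clean double application of the $q$-commutation identity wrapped around a single induction.
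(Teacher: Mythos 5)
Your proof is correct, and it follows the same route the paper intends: the paper simply asserts Lemma \ref{l:5.1} ``by induction,'' and your argument is exactly the natural realization of that—decompose $S(g^ix^k)=S(x)^k g^{-i}$ via the anti-homomorphism property, establish $(-g\inv x)^k=(-1)^k\om^{-k(k-1)/2}g^{-k}x^k$ by induction using $xg^{-k}=\om^{-k}g^{-k}x$, and collect the extra factor $\om^{-ik}$ from commuting $g^{-i}$ past $x^k$. All the exponent bookkeeping checks out, so nothing further is needed.
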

By \eqref{def:ind}, $\nu_2(T_{n^2}(\om)) = \Tr(S)$, and so we proceed to compute the trace of $S$ with the following lemma.
\begin{lem}\label{l:2}
  If $S(g^i x^k )= \a g^i x^k$ for some $\a \in \k$, then $k \equiv -2i \pmod{n}$. Moreover,
  $$
  \Tr(S) =
  \left\{
  \begin{array}{ll}
  \displaystyle 2 \sum_{\ell=0}^{\frac{n-1}{2}} \om^\ell= \frac{2}{1+\om^{(n+1)/2}} & \text{if $n$ is odd,}\\\\
  \displaystyle 2 \sum_{\ell=0}^{\frac{n}{2}-1} \om^\ell=\frac{4}{1-\om} & \text{if $n$ is even.}\\
  \end{array}
  \right.
  $$
\end{lem}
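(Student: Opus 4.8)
The plan is to read off the matrix of $S$ in the basis $\{g^i x^k : 0 \le i, k \le n-1\}$ directly from Lemma \ref{l:5.1} and extract its diagonal. By Lemma \ref{l:5.1}, $S(g^i x^k) = (-1)^k \om^{-\left(\frac{k(k-1)}{2}+ik\right)} g^{-(i+k)} x^k$, which is always a scalar multiple of the single basis vector $g^{-(i+k)} x^k$ (the $g$-exponent read mod $n$). Hence $S(g^i x^k)$ is a scalar multiple of $g^i x^k$ if and only if $g^{-(i+k)} = g^i$, i.e. $-(i+k) \equiv i \pmod{n}$, which is exactly $k \equiv -2i \pmod{n}$; this proves the first assertion. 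Consequently only these \emph{admissible} pairs contribute to the trace, and
$$\Tr(S) = \sum_{\substack{0\le i,k\le n-1\\ k \equiv -2i \pmod{n}}} (-1)^k\,\om^{-\left(\frac{k(k-1)}{2}+ik\right)}.$$

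Next I would simplify the exponent of $\om$ using the admissibility relation $2i \equiv -k \pmod{n}$. Since $\frac{k(k-1)}{2}+ik$ is an honest integer, I may reduce it modulo $n$ while keeping $k \in \{0,\dots,n-1\}$ fixed; multiplying by $2$ gives the integer identity $2\big(\tfrac{k(k-1)}{2}+ik\big) = k^2 - k + 2ik$, and since $2ik \equiv -k^2 \pmod{n}$ this is $\equiv -k \pmod{n}$. Thus the diagonal coefficient $c_{i,k}$ satisfies $c_{i,k}^2 = \om^{k}$ in every case. I would then split according to the parity of $n$, which governs the invertibility of $2$ in $\BZ/n\BZ$.

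For $n$ odd, $2$ is invertible mod $n$, so $i \mapsto (-2i \bmod n)$ is a bijection of $\{0,\dots,n-1\}$ and the sum can be reindexed by $k$ alone; writing $2^{-1} \equiv (n+1)/2$ gives $c_{i,k} = (-1)^k \om^{k(n+1)/2} = (-\zeta)^k$ with $\zeta := \om^{(n+1)/2}$ a square root of $\om$, so $\Tr(S) = \sum_{k=0}^{n-1}(-\zeta)^k$ is geometric (with ratio $\neq 1$ since $\om \neq 1$) and evaluates to $\frac{2}{1+\zeta}$ using $\zeta^n = 1$ and $(-1)^n = -1$. For $n$ even, $2$ is not invertible: every admissible $k = (-2i)\bmod n$ is even, say $k = 2\ell$ with $0 \le \ell \le n/2-1$, and for each $\ell$ there are exactly two $i \in \{0,\dots,n-1\}$ (differing by $n/2$) solving $i \equiv -\ell \pmod{n/2}$. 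The decisive check is that these two values produce the \emph{same} coefficient, which holds because $ik = 2\ell i \equiv -2\ell^2 \pmod{n}$ independently of the choice, yielding $c_{i,2\ell} = \om^\ell$ and hence $\Tr(S) = 2\sum_{\ell=0}^{n/2-1}\om^\ell$. I would finish by reconciling the two displayed closed forms: for $n$ even $\om^{n/2} = -1$ turns the geometric sum into $\frac{4}{1-\om}$, while for $n$ odd the identity $(1-\zeta)(1+\zeta) = 1 - \om^{n+1} = 1 - \om$ shows $2\sum_{\ell=0}^{(n-1)/2}\om^\ell = \frac{2(1-\zeta)}{1-\om} = \frac{2}{1+\zeta}$.

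The main obstacle is the careful bookkeeping of the exponent modulo $n$ together with the two-to-one nature of the map $i \mapsto k$ when $n$ is even. In particular one must verify that the two admissible $i$'s attached to each even $k$ contribute identical summands (so that the factor of $2$ genuinely appears), and one must keep $k$ fixed as an actual integer in $\{0,\dots,n-1\}$ when evaluating $\om^{k(k-1)/2}$, so as not to corrupt the reduction $2ik \equiv -k^2 \pmod{n}$.
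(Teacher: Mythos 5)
Your proposal is correct and follows essentially the same route as the paper: identify the diagonal entries via the condition $k \equiv -2i \pmod{n}$ from Lemma \ref{l:5.1}, then split on the parity of $n$, noting that $i \mapsto -2i$ is a bijection when $n$ is odd and two-to-one onto the even $k$'s when $n$ is even (with the two preimages contributing equal eigenvalues $\om^\ell$), and sum the resulting geometric series. The only cosmetic differences are your intermediate observation $c_{i,k}^2 = \om^k$ and your direct algebraic verification of the identity $2\sum_{\ell=0}^{(n-1)/2}\om^\ell = 2/(1+\om^{(n+1)/2})$, where the paper instead splits the sum over even and odd $k$.
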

\begin{proof}
  The first assertion follows immediately from Lemma \ref{l:5.1}. Now, let $u=\left[\frac{n}{2}\right]$.
  We first assume $n$ is odd. Then $u=\frac{n-1}{2}$ and so $n-2u=1$. In particular,  $u$ is the inverse of $-2$ modulo $n$. Note that
  $\om^{1/2}$ is uniquely determined and is given by
  $$
  \om^{1/2} = \om^{-u}\,.
  $$
  Suppose $g^ix^k$ is an eigenvector of $S$. Then $k \equiv -2i \pmod n$ and hence $i \equiv uk \pmod n$. The eigenvalue $\a$ associated with this eigenvector is
  $$
  \a=(-1)^k \om^{-\left(\frac{k(k-1)}{2}+ik\right)} = (-1)^k \om^{-\left(k(k-1)(-u)+uk^2\right)} = (-1)^k \om^{-uk}\,.
  $$
  Thus,
  $$
  \Tr(S)=\sum_{k=0}^{n-1} (-1)^k \om^{-uk} = \frac{1+\om^{-un}}{1+\om^{-u}}=\frac{2}{1+\om^{-u}}\,.
  $$
  Obviously, $-u \equiv (n+1)/2 \pmod n$. On the other hand, $\{\om^{-uk}\mid k=0,\dots, n-1\}=\{\om^k\mid k=0,\dots, n-1\}$, and so
  $$
  \Tr(S)=\sum_{k \text{ even}}\om^{-uk} - \sum_{k \text{ odd}} \om^{-uk} = \sum_{\ell=0}^u \om^\ell -\sum_{\ell=u+1}^{n-1} \om^\ell = 2\sum_{\ell=0}^u \om^\ell\,.
  $$

  Now, we assume $n$ is even. Then $u=\frac{n}{2}$ and the congruence $k \equiv -2i \pmod n$ implies that $k$ must be even, and so $k=2\ell$  some non-negative integer $\ell < \frac{n}{2}$, and
  $$
  \ell \equiv -i \pmod {\frac{n}{2}}.
  $$
   Consequently,  $i=\frac{n}{2}-\ell$ or $n-\ell$. Thus, the eigenvalues associated with the eigenvectors $g^{\frac{n}{2}-\ell}x^{2\ell}$ and $g^{n-\ell}x^{2\ell}$ are
  $$
  \om^{-\left(\ell(2\ell-1)+ n\ell-2\ell^2\right)} \quad \text{and}\quad
  \om^{-\left(\ell(2\ell-1)+ 2n\ell-2\ell^2\right)}
  $$
  respectively. However, both eigenvalues are equal to $\om^\ell$. Note that $\om^u=-1$. Therefore,
  $$
  \Tr(S)=\sum_{\ell=0}^{u-1} 2\om^\ell = 2\sum_{\ell=0}^{u-1} \om^\ell =
  2 \left(\frac{1-\om^{u}}{1-\om}\right)= \frac{4}{1-\om}\,. \qedhere
  $$
\end{proof}

It has been shown in \cite[Corollary 2.4]{Sch00} that $T_{n^2}(\om)$ is uniquely determined by its associated tensor category $\Mod{T_{n^2}(\om)}$. In particular, $T_{n^2}(\om)$ and $T_{n^2}(\om')$
are not gauge equivalent if $\om \ne \om'$. Here we give an alternative proof by computing the trace of the antipode and using Theorem \ref{t:nu_n}.

\begin{cor}
  Let $\om_1, \om_2 \in \k$ be primitive $n$-th roots of unity. Suppose $S_i$ is the antipode of the Taft algebra $T_{n^2}(\om_i)$, $i=1,2$. Then $\Tr(S_1)=\Tr(S_2)$ iff $\om_1 =\om_2$. In particular,
  $T_{n^2}(\om_1)$ and $T_{n^2}(\om_2)$ are gauge equivalent iff $\om_1=\om_2$.
\end{cor}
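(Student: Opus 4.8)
The plan is to reduce both implications to the explicit trace formulas of Lemma \ref{l:2} and, for the gauge-equivalence claim, to the gauge-invariance of $\nu_2 = \Tr(S)$ from Theorem \ref{t:nu_n}. The forward implication $\om_1 = \om_2 \Rightarrow \Tr(S_1) = \Tr(S_2)$ is immediate, since equal parameters produce the identical Hopf algebra; all the content lies in showing that $\om \mapsto \Tr(S)$ is injective on the primitive $n$-th roots of unity in $\k$.

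I would split this injectivity according to the parity of $n$. For $n$ even, Lemma \ref{l:2} gives $\Tr(S_i) = 4/(1-\om_i)$, a fractional-linear function of $\om_i$; equating the two traces yields $1 - \om_1 = 1 - \om_2$ at once, so $\om_1 = \om_2$. For $n$ odd, Lemma \ref{l:2} gives $\Tr(S_i) = 2/(1+\om_i^{m})$ with $m = (n+1)/2$, so $\Tr(S_1) = \Tr(S_2)$ forces $\om_1^m = \om_2^m$, that is $(\om_1 \om_2\inv)^m = 1$.

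The one genuinely arithmetic step is to pass from $(\om_1\om_2\inv)^m = 1$ to $\om_1 = \om_2$. Here I would use that $2m = n+1 \equiv 1 \pmod n$, so $\gcd(m,n) = 1$. The element $\om_1\om_2\inv$ is an $n$-th root of unity, hence annihilated by both the $m$-th and the $n$-th power; choosing integers $a,b$ with $am + bn = 1$ gives $\om_1\om_2\inv = (\om_1\om_2\inv)^{am+bn} = 1$, so $\om_1 = \om_2$. Equivalently, raising to the $m$-th power is a bijection on the group of $n$-th roots of unity precisely because $\gcd(m,n) = 1$.

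For the final sentence of the corollary I would apply Theorem \ref{t:nu_n}: since $\nu_2(H) = \Tr(S)$ is a gauge invariant, gauge-equivalent Taft algebras $T_{n^2}(\om_1)$ and $T_{n^2}(\om_2)$ must have $\Tr(S_1) = \Tr(S_2)$, whence $\om_1 = \om_2$ by the injectivity just established; the converse direction is trivial. I do not expect a real obstacle, as Lemma \ref{l:2} already exhibits the traces as injective functions of the parameter—the only point needing a moment's attention is verifying $\gcd(m,n)=1$ in the odd case.
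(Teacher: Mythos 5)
Your proposal is correct and follows essentially the same route as the paper: the even case is read off directly from Lemma \ref{l:2}, the odd case reduces to $\om_1^{(n+1)/2}=\om_2^{(n+1)/2}$, and the gauge-equivalence claim follows from Theorem \ref{t:nu_n}. Your Bezout argument with $\gcd(m,n)=1$ is just a restatement of the paper's one-line observation that squaring $\om_i^{(n+1)/2}$ recovers $\om_i$ (take $a=2$, $b=-1$), so there is no substantive difference.
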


\begin{proof}
  Suppose $\Tr(S_1)=\Tr(S_2)$.
  If $n$ is even, then, by Lemma \ref{l:2}, we have $\om_1 =\om_2$. If $n$ is odd, then
  $$
  \om_1^{(n+1)/2}=\om_2^{(n+1)/2}.
  $$
  Hence
  $$
  \om_1= \left(\om_1^{(n+1)/2}\right)^2 = \left(\om_2^{(n+1)/2}\right)^2 = \om_2.
  $$
  The second statement is an immediate consequence of Theorem \ref{t:nu_n}.
\end{proof}

The general formula for the $n$-th indicator of $T_{m^2}(\om)$ is less obvious, but $T_4(-1)$ is an exception.
\begin{ex}
{\rm
The $n$-th indicator of $T_4(-1)$ is $n$, and hence its minimal polynomial is $(x-1)^2$. To show this observation, we can apply Corollary \ref{c:4.6}. In $T_4(-1)$, $\Lambda = x + g x$ is a left integral, and  $\lambda \in {T_4(-1)}^*$ defined by $\lambda(1)=\lambda(g)=\lambda(x)=0$ and $\lambda(gx)=1$ is a left integral of ${T_4(-1)}^*$ such that $\lambda(\Lambda)=1$. By induction, one can show that
$$
\Delta^{(n)}(x) = x \ot 1^{\ot (n-1)} + g \ot x \ot 1^{\ot (n-2)} + \cdots + g^{\ot (n-1)} \ot x \quad\text{for }n \ge 2,$$
where $z^{\ot (\ell)}$ denotes the $\ell$-folded tensor $z \ot \cdots \ot z$.  Thus, using $g^2=1$,
$$
\Delta^{(n)}(gx) = gx \ot g^{\ot (n-1)} + 1 \ot gx \ot g^{\ot (n-2)} + \cdots + 1^{\ot (n-1)} \ot gx \quad\text{for }n \ge 2.$$
Therefore,
$$
\Lambda^{[n]} = \sum_{i=0}^{n-1} g^i x + gx g^{n-1-i} =\sum_{i=0}^{n-1} g^i x + (-1)^{n-1-i} g^{n-i}x =\sum_{i=0}^{n-1} g^i x + (-1)^i g^{i+1}x\,.
$$
Since $\lambda(\Lambda^{[n]})$ is equal to the coefficient of $gx$ in $\Lambda^{[n]}$, we find
$$
\nu_n(T_4(-1))=\lambda(\Lambda^{[n]})=n
$$
for $n \in \BN$.  \qed
}
\end{ex}

A more delicate but direct computation shows that
$$
\nu_n(T_9(\om))=\left\{\begin{array}{ll} n(2+\om) & \text{ if } n \equiv 0 \pmod 3, \\
  n & \text{ if } n \equiv 1 \pmod 3, \\
  n(1+\om) & \text{ if } n \equiv 2 \pmod 3,
\end{array}
\right.
$$
where $\om \in \k$ is a primitive third root of unity. The minimum polynomial of the sequence is $(x-1)^2(x-\om^{-1})^2$.
One can continue the computation using GAP to find the minimal polynomial $p_m(x)$ of $T_{m^2}(\om)$ for $m \ge 3$. We summarize our observation for $m \le 24$ as follows:
$$
p_m(x) = \left\{
\begin{array}{ll}
\dfrac{(x^m-1)^2}{(x-\om)^2} & \text{if } m=2,3, 4, 6, 8, 12, 24,\\ \\
(x^m-1)^2 & \text{otherwise}.
\end{array}
\right.
$$
It has been shown in \cite{EG2} that the quasi-exponent $\qexp(H)$ of a finite-dimensional Hopf algebra $H$  is a gauge invariant, and $\qexp(T_{m^2}(\om))=m$ for all positive integers $m$. The preceding observation suggests some relation between the quasi-exponent $\qexp(H)$ and the minimal polynomial $p_H$ of a finite-dimensional Hopf algebra $H$. It would be interesting to know how they are actually related.

%------------------------------------------------------------------------------------------------------
\section{Gauge invariance of $\Tr(S_V)$} \label{TrS}
Let $H$ be a finite-dimensional Hopf algebra over a field $\k$ with antipode $S$.
For $V \in \Mod{H}$, we denote the left dual of $V \in \Mod{H}$ by $V\du$. Suppose $V \in \Mod{H}$ is  self-dual, i.e. $V\cong V\du$ as $H$-modules. Then $S(\ann V)=\ann V$ and so $S$ induces an algebra anti-automorphism $S_V$ on $H/\ann V$ defined by $S_V(h +\ann V)= S(h)+\ann V$. We define for each absolutely simple $V \in \Mod{H}$
\begin{equation} \label{eq:gamma}
\gamma(V)=\left\{
\begin{array}{ll}
\Tr(S_V) & \text{if }V \cong V\du\,,\\
0 & \text{otherwise.}
\end{array}
\right.
\end{equation}
Obviously, $\gamma(V)$ depends only on the isomorphism class of $V$.

In the semisimple case, $\Tr(S_V)$ is closely related to the Frobenius-Schur
indicator. More precisely, $\Tr(S_V)=\nu_2(V)\dim V$. It may also prove to be important in the non-semisimple case. We note that Jedwab has begun the study of $\Tr(S_V)$ in \cite{J}, and has computed $\Tr(S_V)$ for the irreducible representations of $u_q(sl_2)$. Some additional work on this topic has been done in \cite{JK}.

Similar to the case of the 2nd Frobenius-Schur indicator of a simple module over a semisimple complex Hopf algebra, we prove in this section that $\gamma(V)$ is an invariant of the tensor category $\Mod{H}$.
\begin{thm}\label{t:1}
  Let $H$, $H'$ be finite-dimensional Hopf algebras over a field $\k$. If $\CF: \Mod{H} \to \Mod{H'}$ is a equivalence of tensor categories, then
  $$
  \gamma(V)=\gamma(\CF(V))
  $$
  for all absolutely simple $V \in \Mod{H}$.
\end{thm}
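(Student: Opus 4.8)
The plan is to realize $\Tr(S_V)$, for a self-dual absolutely simple $V$, as the value of a scalar endomorphism of the unit object that is built entirely from the rigid tensor structure of $\Mod{H}$ together with a self-duality isomorphism of $V$; any such scalar is automatically preserved by a tensor equivalence, and the theorem follows. This mirrors the way higher Frobenius–Schur indicators are shown to be tensor invariants, but works for the non-semisimple $V$ as well.

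First I would fix an isomorphism $j\colon V\to V\du$ in $\Mod{H}$, which is unique up to scalar since $V$ is absolutely simple, and set $b(v,w)=j(v)(w)$, a nondegenerate bilinear form on $V$. The $H$-linearity of $j$ is exactly the invariance $b(hv,w)=b(v,S(h)w)$, so under the identification $H/\ann V\xrightarrow{\sim}\End_\k(V)$ the induced map $S_V$ becomes the $b$-adjoint anti-automorphism $X\mapsto X^{\ast_b}$, where $b(Xv,w)=b(v,X^{\ast_b}w)$. In a basis with Gram matrix $B$ one has $S_V(X)=B\inv X^{t}B$.

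The key step is a direct computation of the ordinary trace of this anti-automorphism. Set $G:=(j\du)\inv\circ j\colon V\to V\bidu$; this morphism does not depend on the scalar ambiguity in $j$, and it is the isomorphism $V\cong V\bidu$ coming from self-duality (equivalently, $G$ implements $S^2$ on $\End_\k(V)$). In coordinates $G=B\inv B^{t}$, and a bare-hands evaluation gives
$$
\Tr(S_V)=\sum_{i,j}(B\inv)_{ij}B_{ij}=\tr(B\inv B^{t})=\tr(G).
$$
I would then recognize this number as a categorical trace: with the standard (co)evaluations of $\Mod{H}$ one checks
$$
\Tr(S_V)=\ev_{V\du}\circ(G\o\id_{V\du})\circ\db_V \ \in\ \End_{\Mod{H}}(\k)=\k,
$$
so that $\gamma(V)=\Tr(S_V)$ is a composite of structural duality morphisms and the comparison morphism $G$, all determined by $V$ and $j$ inside the tensor category.

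Finally I would invoke naturality. The equivalence $\CF$ carries duals to duals via canonical comparison isomorphisms $\zeta_W\colon\CF(W\du)\xrightarrow{\sim}\CF(W)\du$ compatible with $\ev$ and $\db$; hence $j':=\zeta_V\circ\CF(j)$ is a self-duality of $\CF(V)$ (in particular $V$ is self-dual iff $\CF(V)$ is, which matches the two cases in the definition of $\gamma$), $\CF$ sends $G$ to the corresponding $G'$ for $\CF(V)$, and it sends the displayed composite to the analogous one for $\CF(V)$. Since $\CF$ is the identity on $\End(\k)=\k$, the two scalars agree, giving $\gamma(V)=\gamma(\CF(V))$. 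The main work — and the only genuinely computational point — is the middle step: verifying that the ordinary trace of the $b$-adjoint equals the categorical composite and that the $G$ appearing there is precisely the double-dual comparison, so that the coherence of $\CF$ with respect to duality applies verbatim; the preservation of self-duality and the bookkeeping of the isomorphisms $\zeta_W$ are then routine.
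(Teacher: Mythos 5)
Your argument is correct, but it takes a genuinely different route from the paper's. The paper first reduces, via Theorem \ref{t:p1}, to the case of a 2-cocycle twist $H^F$ composed with a bialgebra isomorphism, and then proves invariance by an explicit matrix computation (Proposition \ref{p:3}): writing $\ol S(X)=UX^tU\inv$, it shows via Lemma \ref{l:trace anti map} that $\Tr(\ol{S^F})=\tr(U^tU\inv\,\ol{S(u)u\inv})$ collapses back to $\tr(U^tU\inv)$ using the cocycle identities for $u=\b_F$. You instead never leave the categorical level: you identify $S_V$ with the $\bb$-adjoint, observe that $\Tr(S_V)=\tr\bigl(B\inv B^t\bigr)=\tr(G)$ for the scalar-independent comparison $G=(j\du)\inv\circ j:V\to V\bidu$, and express this as the composite $\ev_{V\du}\circ(G\o\id_{V\du})\circ\db_V\in\End(\k)$, which any tensor equivalence preserves because it carries duals, evaluations and coevaluations to their counterparts up to coherent isomorphisms. (Your computational identities check out: with $S_V(X)=B\inv X^tB$ the lemma gives $\tr\bigl((B\inv)^tB\bigr)=\tr(B\inv B^t)$, which is indeed the categorical trace of $(j\du)\inv\circ j$.) What each approach buys: the paper's computation is elementary and self-contained modulo Theorem \ref{t:p1}, and the explicit cocycle formulas are reused elsewhere (e.g.\ in Theorem \ref{t:nu_n}); your argument is more conceptual, bypasses the reduction to cocycle twists entirely, makes transparent that $\gamma(V)$ is an invariant of the abstract rigid tensor category (so it would extend verbatim to quasi-Hopf algebras), and cleanly anticipates Proposition \ref{p:product}, where your $G$ becomes $\nu_2(V)$ times the pivotal trivialization of the double dual. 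The price is that you must carry out the duality bookkeeping for the comparison isomorphisms $\zeta_W$ carefully, including the compatibility $\zeta_X\circ\CF(f\du)=\CF(f)\du\circ\zeta_Y$ and the induced double-dual comparison; this is standard but is exactly where an imprecise version of the argument could go wrong, so it deserves to be written out if you adopt this route.
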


When $H$ is pivotal, the 2nd Frobenius-Schur indicator $\nu_2(V)$ and pivotal dimension $\qdim(V)$ are defined for each finite-dimensional $H$-module $V$. In this case, we prove in Proposition
\ref{p:product} that
$$
\gamma(V) = \nu_2(V) \qdim(V)= \nu_2(V) \qdimr(V)
$$
for each absolutely simple $H$-module $V$.

To prove Theorem \ref{t:1}, we first need the following lemma.
 We thank H.-J. Schneider for helpful conversations about the lemma.

\begin{lem}\label{l:trace anti map}
Let $U=[U_{ij}]$ be an invertible element of $M_n(\k)$, and $A$ an algebra anti-automorphism of $M_n(\k)$ defined by $A(X)=UX^tU\inv$ for some $U \in \GL_n(\k)$, where $X^t$ denotes the transpose of $X$. Then
$$
\Tr(A)=\tr(U^tU\inv)
$$
where $\tr$ is the ordinary trace of matrices.
\end{lem}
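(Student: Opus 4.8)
The statement concerns the trace of a linear operator $A$ on the $n^2$-dimensional space $M_n(\k)$, where $A(X) = UX^tU\inv$. The plan is to compute this trace directly by choosing an explicit basis for $M_n(\k)$ and summing the appropriate matrix entries of $A$ in that basis. The natural basis is the set of matrix units $\{E_{ij} : 1 \le i,j \le n\}$, where $E_{ij}$ has a $1$ in position $(i,j)$ and zeros elsewhere. The key fact I will use repeatedly is that $E_{ij}^t = E_{ji}$, and that multiplication of matrix units obeys $E_{ab}E_{cd} = \delta_{bc}E_{ad}$.

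First I would apply $A$ to a basis element: $A(E_{ij}) = U E_{ij}^t U\inv = U E_{ji} U\inv$. Writing $U = [U_{ab}]$ and $U\inv = [(U\inv)_{ab}]$ and expanding the products of matrix units, I would obtain an explicit expression for $A(E_{ij})$ as a linear combination $\sum_{p,q} c^{(ij)}_{pq} E_{pq}$ of basis elements. Concretely, $U E_{ji} U\inv = \sum_{p,q} U_{pj}(U\inv)_{iq} E_{pq}$, so the coefficient of $E_{pq}$ is $U_{pj}(U\inv)_{iq}$. The trace of $A$ is then $\Tr(A) = \sum_{i,j} c^{(ij)}_{ij}$, the sum of the coefficients of $E_{ij}$ in the expansion of $A(E_{ij})$, which gives $\Tr(A) = \sum_{i,j} U_{ij}(U\inv)_{ij}$.

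The final step is to recognize this double sum as an ordinary matrix trace. Since $U_{ij} = (U^t)_{ji}$, I would rewrite $\sum_{i,j} U_{ij}(U\inv)_{ij} = \sum_{i,j} (U^t)_{ji}(U\inv)_{ij} = \sum_j \left(\sum_i (U^t)_{ji}(U\inv)_{ij}\right) = \sum_j (U^t U\inv)_{jj} = \tr(U^t U\inv)$, which is precisely the claimed formula.

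The computation is entirely routine; there is no real obstacle, only careful bookkeeping of indices. The one point requiring a little attention is keeping the roles of $U$ and $U\inv$ straight when extracting the diagonal coefficient, and matching the summation index of the matrix-unit expansion to the index pattern of an ordinary trace. I expect the whole argument to be a short direct calculation once the matrix-unit basis is fixed.
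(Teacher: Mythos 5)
Your proposal is correct and is essentially the same computation as the paper's: both evaluate $\Tr(A)$ in the matrix-unit basis $\{E_{ij}\}$ and arrive at $\sum_{i,j}U_{ij}(U\inv)_{ij}=\tr(U^tU\inv)$; the paper merely packages the extraction of the diagonal coefficient via the orthonormal trace form $\langle X,Y\rangle=\tr(XY^t)$ rather than expanding $UE_{ji}U\inv$ explicitly.
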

\begin{proof}
  Let $E_{ij}$ be the matrix $[\delta_{ij}] \in M_n(\k)$, and $U\inv=[\ol U_{ij}]$. Since $\langle X, Y \rangle=\tr(XY^t)$ defines a non-degenerate symmetric bilinear form on $M_n(\k)$ and $\langle E_{ij}, E_{kl} \rangle =\delta_{ij, kl}$,  we have
  $$
   \Tr(A)  = \sum_{i,j} \langle A(E_{ij}), E_{ij}\rangle= \sum_{i,j} \tr(UE_{ji}U\inv E_{ji}) = \sum_{i,j} U_{ij} \ol U_{ij} =\tr(U^t U\inv)\,. \qedhere
  $$
\end{proof}
The second step is to show that $\gamma(V)$ is invariant under 2-cocycle twisting of $H$. Let $F$ be a 2-cocycle of $H$, and $V \in \Mod{H}$. We denote by $V_F$ the same $H$-module $V$ but considered as an object in $\Mod{H^F}$.
\begin{prop}\label{p:3}
Let $H$ be a finite-dimensional Hopf algebra over $\k$ and $F$ a 2-cocycle of $H$. Then
$$
\gamma(V) = \gamma(V_F)
$$
for all absolutely simple $V \in \Mod{H}$.
\end{prop}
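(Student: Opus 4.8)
The plan is to reduce the invariance of $\gamma(V)$ under 2-cocycle twisting to the explicit matrix computation in Lemma \ref{l:trace anti map}. The key point is that for an absolutely simple self-dual module $V$, the quotient $H/\ann V$ is isomorphic to a full matrix algebra $M_n(\k)$ (where $n = \dim V$), and the induced anti-automorphism $S_V$ can be written in the standardized form $X \mapsto U X^t U\inv$ for a suitable invertible matrix $U$. First I would observe that, since the algebra structures of $H$ and $H^F$ coincide, the module $V$ and its twist $V_F$ have \emph{the same} annihilator, so $H/\ann V$ and $H^F/\ann V_F$ are the same matrix algebra; only the antipodes differ. By \eqref{eq:S_F} the twisted antipode is $S^F(h) = u S(h) u\inv$ where $u = \b_F$ is the invertible element computed in the proof of Theorem \ref{t:nu_n}. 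Writing $\bar u$ for the image of $u$ in $M_n(\k)$, the induced anti-automorphism on the quotient is $(S_V)_F = \bar u \, S_V(-)\, \bar u\inv$.

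The next step is to check self-duality is preserved: since twisting by a 2-cocycle gives a tensor equivalence $\mod{H} \to \mod{H^F}$ (the functor $V \mapsto V_F$ with the natural isomorphism $\xi$ from $F\cdot$), duals are sent to duals, so $V \cong V\du$ forces $V_F \cong V_F\du$, and the two cases in the definition \eqref{eq:gamma} of $\gamma$ match up. Thus it suffices to compare traces in the self-dual case. Fixing an isomorphism $V \cong V\du$ presents $S_V$ on $M_n(\k)$ in the form $X \mapsto U X^t U\inv$, and Lemma \ref{l:trace anti map} gives $\gamma(V) = \Tr(S_V) = \tr(U^t U\inv)$.

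The heart of the matter is to show that conjugating $S_V$ by $\bar u$ does not change this trace. Concretely, $(S_V)_F(X) = \bar u U X^t U\inv \bar u\inv = (\bar u U)X^t(\bar u U)\inv$, so $(S_V)_F$ has the same transpose-conjugation form with $U$ replaced by $U' = \bar u U$. By Lemma \ref{l:trace anti map} again, $\gamma(V_F) = \tr((U')^t (U')\inv) = \tr(U^t \bar u^t \bar u\inv U\inv)$. The computation will therefore come down to verifying that $\bar u^t$ and $\bar u\inv$ cancel appropriately, i.e. that $\tr(U^t \bar u^t \bar u\inv U\inv) = \tr(U^t U\inv)$. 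I expect this to follow from the fact that the isomorphism $V \cong V\du$ is itself an $H$-module map, which imposes a compatibility $U^t = \pm U$ between $U$ and the antipode data, together with the relation between $\bar u$ and $U$ coming from how the pivotal/duality structure transforms under the twist.

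The main obstacle will be pinning down exactly how $\bar u$ interacts with the symmetry of $U$: one must show that the extra factor $\bar u^t \bar u\inv$ introduced by the twist is conjugate, or trace-equivalent, to the identity after sandwiching between $U^t$ and $U\inv$. The cleanest route is probably to track the duality morphism through the tensor equivalence $V \mapsto V_F$ explicitly — computing how the evaluation and coevaluation maps, and hence the matrix $U$, transform under the natural isomorphism $\xi$ built from $F$ — so that the new duality datum $U'$ is manifestly $\bar u U$ up to scalar. Once $U' = \bar u U$ is established, the equality $\gamma(V) = \gamma(V_F)$ is immediate from Lemma \ref{l:trace anti map}, since both sides are computed from transpose-conjugation anti-automorphisms whose defining matrices differ by the left factor $\bar u$, and the trace $\tr((U')^t(U')\inv)$ must then be shown to be independent of that factor.
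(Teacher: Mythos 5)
Your setup coincides with the paper's: the algebra and hence $\ann V$ are unchanged by twisting, $S^F=uS(\cdot)u\inv$ with $u=\b_F$, self-duality is preserved by the tensor equivalence, and Lemma \ref{l:trace anti map} gives $\gamma(V)=\tr(U^tU\inv)$ and $\gamma(V_F)=\tr\bigl((\ol u U)^t(\ol u U)\inv\bigr)=\tr(U^t\ol u^tU\inv\ol u\inv)$. But the proof stops exactly where the real work begins: you never establish
$$
\tr(U^t\,\ol u^t\,U\inv\,\ol u\inv)=\tr(U^tU\inv),
$$
you only ``expect'' it to follow from a symmetry $U^t=\pm U$ of the duality matrix. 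That mechanism is not available for a general finite-dimensional Hopf algebra: the invariant bilinear form on an absolutely simple self-dual module need not be symmetric or antisymmetric --- the correct relation is \eqref{eq:nu}, which in matrix form reads $U^tU\inv\ol g=\nu_2(V)I$ for a pivotal element $g$, so $U^t=\pm U$ only when $g$ acts as a scalar (e.g.\ in the involutive case). Nor can the identity be formal in $\ol u$: for an arbitrary invertible $w$ one has $\tr(U^tw^tU\inv w\inv)\ne\tr(U^tU\inv)$ already for $U=I$, so the specific shape $u=\sum_if_iS(g_i)$, $u\inv=\sum_jS(d_j)e_j$ coming from the 2-cocycle must enter.

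The paper supplies precisely the missing computation. From $\ol S(X)=UX^tU\inv$ one gets $\ol u^t=U\inv\,\ol{S(u)}\,U$, hence $\tr(U^t\ol u^tU\inv\ol u\inv)=\tr\bigl(U^tU\inv\,\ol{S(u)u\inv}\bigr)$. Expanding $S(u)u\inv=\sum_{i,j}S^2(g_i)S(d_jf_i)e_j$, using $\ol{S^2}(X)=U(U\inv)^tXU^tU\inv$ to commute $U^tU\inv$ past $\ol{S^2(g_i)}$ (turning it into $\ol{g_i}$), and then cycling $\ol{g_i}$ around the trace yields $\sum_{i,j}\tr\bigl(U^tU\inv\,\ol{S(d_jf_i)e_jg_i}\bigr)$. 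Finally $\sum_{i,j}S(d_jf_i)e_jg_i=m\circ(S\o\id)(F\inv F)=1$, which produces the cancellation. Without these steps (or an equivalent explicit tracking of the duality morphism through the equivalence, which you gesture at but do not carry out), the proposal does not prove the proposition.
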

\begin{proof}
 Let $F=\sum_i f_i \o g_i$ be a 2-cocycle of $H$ and $F\inv=\sum_j d_i\o e_j$.
Then the twist $H^F$ of $H$ by $F$ has the antipode $S^F$ given by
$$
S^F(h)=uS(h) u\inv
$$
where $u=\sum_i f_iS(g_i)$ and $u\inv = \sum_i S(d_j)e_j$. Suppose $V$ is an absolutely simple object of $\Mod{H}$. The left dual $V\du$ in $\Mod{H}$ is different from $V_F\du$ in $\Mod{H^F}$, but they are isomorphic as $H$-modules under the duality transformation (cf. \cite{NS07a}) $\tilde\xi: V\du \to V\du_F$ defined by
$$
\tilde \xi (f)(v)=f(u\inv v)\quad \text{ for $f \in V^*$ and $v \in V$.}
$$
Thus, $V$ is self-dual in $\Mod{H}$ if, and only if, $V_F$ is self-dual in $\Mod{H^F}$. Therefore,
$\gamma(V)=\gamma(V_F)=0$ if $V$ is not self-dual in $\Mod{H}$.

Now, let us further assume $V$ is self-dual in $\Mod{H}$. Since $V_F=V$ as $H$-modules,
$$
H/\ann V = H/\ann V_F \stackrel{\phi}{\cong} M_n(\k)
$$
as $\k$-algebras, where $n=\dim_\k V$. We write $\ol h$ for $\phi(h+\ann V) \in M_n(\k)$, and
let
\begin{equation}\label{eq:Sbar}
\ol S:=\phi \circ S_V \circ \phi\inv \quad\text{and}\quad \ol {S^F}:=\phi \circ S^F_{V_F} \circ \phi\inv\,.
\end{equation}
Obviously, both $\ol S$ and $\ol {S^F}$ are algebra anti-automorphisms on $M_n(\k)$, and
 $$
 \gamma(V) =\Tr(\ol S), \quad \text{and}\quad\gamma(V_F) = \Tr(\ol {S^F})\,.
 $$
 Moreover, there is an invertible matrix $U$ such that
 \begin{equation}\label{eq:U1}
\ol {{S}(h)} =  \ol {S}(\ol h) = U {\ol h}^t U\inv.
\end{equation}
From Equation \eqref{eq:U1} we get the following equalities:
\begin{equation}\label{eq:U2}
 \ol {{S^F}(h)} = \ol {S^F}(\ol h) = {\ol u} U {\ol h}^t U\inv  {\ol u}\inv\,,\quad
 \ol {S^2(h)} = \ol S^2(\ol h) = U(U\inv)^t {\ol h} U^tU\inv
\end{equation}
for all $h \in H$.

In view of Lemma \ref{l:trace anti map},
$$
\gamma(V)= \Tr(\ol S) = \tr(U^tU\inv)\,.
$$
 Thus,
by Equations \eqref{eq:U1}, \eqref{eq:U2} and Lemma \ref{l:trace anti map}, we find
\begin{multline*}
 \gamma(V_F) = \Tr(\ol {S^F})= \tr(U^t {\ol u}^t U\inv {\ol u}\inv) =\tr(U^t U\inv  \ol{S(u)u\inv}) \\
 = \sum_{i,j}\tr(U^t U\inv  \ol{S^2(g_i)S(d_jf_i)e_j})
 = \sum_{i,j}\tr(\ol{g_i} U^t U\inv \ol{S(d_jf_i)e_j})\\
 =\sum_{i,j}\tr(U^t U\inv \ol{S(d_jf_i)e_jg_i})
 =\tr(U^t U\inv)=\gamma(V)\,. \qedhere
\end{multline*}
\end{proof}

\begin{proof}[Proof of Theorem \ref{t:1}]
By Theorem \ref{t:p1}, if a $\k$-linear equivalence $\CF : \Mod{H} \to \Mod{H'}$ defines a tensor equivalence, then  $H$ and $H'$ are gauge equivalence, i.e. there exist a 2-cocycle $F$ of $H$ and a bialgebra isomorphism $\sigma: H' \to H^F $. Moreover, $\CF$ is naturally isomorphic to the $\k$-linear equivalence $\lsub \sigma (-) : \Mod{H} \to \Mod{H'}$ induced by the algebra isomorphism $\sigma$. For any absolutely simple $V \in \Mod{H}$,  $V$ is self-dual in $\Mod{H}$ if, and only if, $\CF(V)$ is self-dual in $\Mod{H'}$. Thus, if $V$ is not self-dual, then $\gamma(V)=\gamma(\CF(V))=0$.

Assume $V$ is self-dual in $\Mod{H}$. Then $\ann \lsub \sigma V = \sigma\inv(\ann V_F)$, and so
$\sigma$ induces an algebra isomorphism $\ol \sigma: H'/\ann \lsub \sigma V \to H/\ann V_F$.
Let $S, S'$ and $S^F$ denote the antipodes of $H$, $H'$ and $H^F$ respectively. Then
$$
\sigma\inv  \circ S^F \circ \sigma = S',
$$
and so
$$
\quad \ol \sigma\inv \circ S^F_{V_F} \circ \ol \sigma = S'_{\lsub \sigma V}\,.
$$
Therefore, by Proposition \ref{p:3},
$$
\gamma(\CF(V))=\gamma(\lsub{\sigma} V)= \Tr(S'_{\lsub \sigma V}) = \Tr(S^F_{V_F}) = \gamma(V_F)  =\gamma(V)\,. \qedhere
$$
\end{proof}

It is clear that the invariant $\gamma(V)$ of the tensor category $\Mod{H}$ is closely related to the 2nd Frobenius-Schur indicator when $H$ is split semisimple. A more general relationship among the invariant $\gamma(V)$, the 2nd Frobenius-Schur indicator $\nu_2(V)$ and the pivotal dimension $\qdim(V)$ also appears in the case of finite-dimensional pivotal Hopf algebras.

\begin{df}
  A \textbf{pivotal element} of a Hopf algebra $H$ with the antipode $S$ is group-like element $g$ such that $S^2(h) = ghg\inv$ for all $h \in H$. A Hopf algebra which admits a pivotal element is called \textbf{pivotal}.
\end{df}

Recall that there are two natural maps, the evaluation map $\ev: V\du \o V\to \k$ and the dual basis or coevaluation map $\db: \k \to V\o V\du$, associated with each finite-dimensional module $V$ over a Hopf algebra $H$, where $\{v_i\}$ is a basis for $V$ and $\{v^i\}$ is its dual basis in $V\du$.

Now, we assume $H$ is a pivotal Hopf algebra with a pivotal element $g$ and antipode $S$. Suppose $V \in \Mod{H}$. Then the map $j: V \to V\bidu$ defined by
$$
j(v)(f) = f(gv)\quad \text{for all $v \in V$ and $f \in V^*$},
$$
is a pivotal structure of $\Mod{H}$.  The (left) pivotal (or quantum) dimension $\qdim(V)$ of $V$ is defined as the scalar corresponding to the $\k$-linear map
$$
\k \xrightarrow{\db} V\du\o V\bidu  \xrightarrow{\id \o j\inv} V\du\o V \xrightarrow{\ev} \k\,.
$$
Direct simplification shows that
$$
\qdim(V)=\chi_V(g\inv)
$$
where $\chi_V$ is the character of $V$. The right pivotal dimension $\text{qdim}_r(V)$ can be defined similarly, and $\text{qdim}_r(V) = \chi_V(g)$.

Let us identify $\Hom_H(\k, V \o V)$ with the $H$-invariant space $(V\o V)^H$.  Following \cite{NS07b}, the map $E_2$ is defined by
\begin{equation}\label{eq:E2}
E_2(\sum_i u_i \o v_i) = \sum_i v_i \o g\inv u_i\quad \text{for all }\sum_i u_i \o v_i \in (V\o V)^H,
\end{equation}
and
\begin{equation}\label{eq:nu2}
\nu_2(V) = \Tr(E_2)\,.
\end{equation}
Note that $E_2^2 =\id$ and $\Hom_H(\k, V \o V) \cong \Hom_H(V\du, V)$ as $\k$-linear spaces.
Thus, if  $V$ is \emph{absolutely} simple, then
$$
\dim (V\o V)^H= \left\{
\begin{array}{ll}
1 & \text{if } V \cong V\du,\\
0 & \text{otherwise}\,.
\end{array}
\right.
$$
Therefore,  $\nu_2(V)=\pm 1$ if $V \cong V\du$, and $0$ otherwise. In particular, $\Tr(E_2)$ is the eigenvalue of $E_2$ when $V$ is self-dual.

If $V$ is absolutely simple and self-dual in $\Mod{H}$, then $\dim \Hom_H(\k, V\du \o V\du)=1=\dim \Hom_H(V, V\du)$. Let $f \in \Hom_H(V, V\du)$ be a non-zero element. Then
$$
\bb(x, y) := f(x)(y) \quad \text{for all }x, y \in V,
$$
 defines an $H$-invariant non-degenerate bilinear form on $V$. Conversely, if $\bb'$ is a non-zero $H$-invariant bilinear form on $V$, then $f'(x)=\bb'(x,y)$ defines a non-zero $H$-module map from $V$ to $V\du$. Thus, $f'$ is a scalar multiple of $f$ and so $\bb'$ is a scalar multiple of $\bb$.

Note that $\bb=\sum_i u_i^* \o v_i^* \in V\du \o V\du$, and the assignment $1 \mapsto \sum_i u_i^* \o v_i^* \in V\du \o V\du$ defines a non-zero map in $\Hom_H(\k, V\du \o V\du)$. By \eqref{eq:E2} and \eqref{eq:nu2}, we find
$$
\nu_2(V) \sum_i u_i^* \o v_i^* = \sum_i v_i^* \o g\inv u_i^*\,.
$$
In terms of $\bb$, we have the relation
\begin{equation}\label{eq:nu}
\nu_2(V) \bb(x,y) = \bb(y, gx) \quad\text{for all }x,y \in V\,.
\end{equation}
These paragraphs have summarized the Frobenius-Schur Theorem for  absolutely simple self-dual modules over a finite-dimensional pivotal Hopf algebra (cf. \cite{LM} and \cite{MaN}).
\begin{prop}\label{p:product}
  Let $H$ be a finite-dimensional Hopf algebra over $\k$ with antipode $S$. If $H$ admits a pivotal element $g$, then
  \begin{equation}\label{eq:gamma2}
    \gamma(V)=\nu_2(V)\cdot \qdim(V) =\nu_2(V)\cdot \qdimr(V)  \,.
  \end{equation}
  for all absolutely simple $V \in \Mod{H}$, where $\nu_2(V)$, $\qdim(V)$ and $\text{qdim}_r(V)$ are computed using the pivotal element $g$. In particular, $\nu_2(V) \qdim(V)$ is independent of the choice of the
pivotal element $g$.

\end{prop}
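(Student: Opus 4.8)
The plan is to push everything down to the matrix algebra $M_n(\k) \cong H/\ann V$, where $n = \dim_\k V$, and then invoke Lemma \ref{l:trace anti map}. First I dispose of the non-self-dual case: there $\gamma(V) = 0$ by definition, and $\nu_2(V) = 0$ by the discussion following \eqref{eq:nu2}, so both products vanish regardless of $g$. So I assume $V \cong V\du$, fix an isomorphism $\phi : H/\ann V \to M_n(\k)$, and write $\ol h := \phi(h + \ann V) = \rho(h)$ for the representation $\rho$ afforded by $V$. Exactly as in the proof of Proposition \ref{p:3}, the induced anti-automorphism takes the form $\ol S(\ol h) = U \ol h^t U\inv$ for some invertible $U$, and Lemma \ref{l:trace anti map} gives $\gamma(V) = \Tr(\ol S) = \tr(U^t U\inv)$.

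The crux is to pin down $U$ in terms of the invariant bilinear form. Let $\bb$ be the non-degenerate $H$-invariant bilinear form on $V$ from the Frobenius-Schur discussion, with Gram matrix $B$, so that $\bb(x,y) = x^t B y$. The associated isomorphism $V \to V\du$, $x \mapsto \bb(x,-)$, intertwines the $H$-actions; unwinding this in the chosen basis yields $\rho(S(h)) = B\inv \rho(h)^t B$, i.e. $\ol{S(h)} = B\inv \ol h^t B$ for all $h$. Comparing this with $\ol S(\ol h) = U \ol h^t U\inv$ and using that $\{\ol h^t\}$ spans $M_n(\k)$ (absolute simplicity of $V$), the product $BU$ is central and hence scalar; thus $U = cB\inv$, so $U^t U\inv = (B^t)\inv B$ and $\gamma(V) = \tr\!\left((B^t)\inv B\right)$.

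Next I feed in the Frobenius-Schur relation \eqref{eq:nu}, namely $\nu_2(V)\,\bb(x,y) = \bb(y, gx)$. Writing both sides in matrix form and using that the scalar $\bb(y,gx)$ equals its own transpose produces the identity $\rho(g)^t B^t = \nu_2(V)\, B$. Solving gives $(B^t)\inv B = \nu_2(V)\, B\inv \rho(g)^t B$, and taking traces yields $\gamma(V) = \nu_2(V)\,\tr(\rho(g)) = \nu_2(V)\,\chi_V(g) = \nu_2(V)\,\qdimr(V)$. Finally, self-duality forces $\chi_V = \chi_V \circ S$, and since $g$ is group-like we have $S(g) = g\inv$, so $\chi_V(g) = \chi_V(g\inv)$, that is $\qdimr(V) = \qdim(V)$; this establishes \eqref{eq:gamma2} in both forms. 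The independence of the pivotal element is then immediate, because $\gamma(V) = \Tr(S_V)$ is defined purely from $S$ and $V$ with no reference to $g$, so the common value $\nu_2(V)\qdim(V)$ cannot depend on the choice of $g$.

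The step I expect to demand the most care is the bookkeeping of transposes and inverses: both the identification $U = cB\inv$ and the conversion of \eqref{eq:nu} into the matrix identity $\rho(g)^t B^t = \nu_2(V) B$ are sensitive to a misplaced transpose or a dropped sign, and it is precisely the clean cancellation $\tr(B\inv \rho(g)^t B) = \tr(\rho(g))$ that makes the final trace collapse to $\chi_V(g)$.
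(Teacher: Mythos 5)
Your proof is correct and follows essentially the same route as the paper's: reduce to $M_n(\k)\cong H/\ann V$, apply Lemma \ref{l:trace anti map} to get $\gamma(V)=\tr(U^tU\inv)$, and use the invariant bilinear form together with \eqref{eq:nu} to identify the remaining scalar as $\nu_2(V)$ times a character value of $g$. The only (harmless) differences are that you start from the Gram matrix $B$ and deduce $U=cB\inv$ by Schur's lemma where the paper instead builds $\bb$ out of $U\inv$ and invokes $S^2=\mathrm{Ad}_g$ directly, and that your displayed identity $(B^t)\inv B=\nu_2(V)\,B\inv\rho(g)^tB$ should strictly carry $\nu_2(V)\inv$ rather than $\nu_2(V)$ --- immaterial here since $\nu_2(V)=\pm1$ for a self-dual absolutely simple module.
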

\begin{proof}
  Let $V$ be an absolutely simple $H$-module. If $V$ is not self-dual, then  $\gamma(V)=\nu_2(V)=0$ and so the equalities hold trivially. Assume $V$ is self-dual, and let $\{v_1, \dots, v_n\}$ be a basis for $V$. Then $hv_j = \sum_{j=1}^n h_{ij}v_i$ for some $h_{ij} \in \k$, and we write $\ol h$ for the matrix $[h_{ij}]$. The assignment $\pi: h \mapsto \ol h$ defines a matrix representation of $H$ afforded by $V$ with $\ker \pi = \ann V$. Since $V$ is absolutely simple, $\pi$ is surjective, and hence $\pi$
  induces an algebra isomorphism $\phi : H/\ann V \to M_n(\k)$ such that $\phi (h +\ann V)=\ol h =\pi(h)$ for all $h \in H$.

  Following the notation in the proof of Proposition \ref{p:3}, we let $\ol S = \phi \circ S_V \circ \phi\inv$. Then
  $$
  \ol S(X) = U X^t U\inv, \quad \text{and so} \quad \ol S^2(X) = (U^tU\inv)\inv X U^t U\inv
  $$
  for some $U \in \GL_n(\k)$. On the other hand,
  $$
  \ol S^2(X) = \ol g X \ol  {g\inv}.
  $$
  Therefore, $U^t U\inv \ol g$ is in the center of $M_n(\k)$ and hence
  $$
  U^t U\inv \ol g=c I\quad\text{for some } c \in \k.
  $$
   Thus, by Lemma \ref{l:trace anti map},
  $$
  \gamma(V) = \tr(U^tU\inv) = c \tr(\ol{g\inv})=c \chi_V(g\inv)\,.
  $$
  We claim that $c =\nu_2(V)$. Let $\{e_1, \dots, e_n\}$ denote the standard basis for $\k^n$ and let $(\cdot,\cdot)$ denote the standard non-degenerate symmetric bilinear form on $\k^n$.
  We define  the bilinear form $\bb$ on $V$ by extending the assignment $\bb(v_i, v_j) = (e_i, U\inv e_j)$ linearly. Then for $h \in H$,
  \begin{align*}
  \bb(h_1 v_i, h_2 v_j) &= (\ol{h_1} e_i, U\inv \ol{h_2} e_j)= (e_i,\ol{h_1}^t U\inv \ol{h_2}e_j)
  = (e_i, U\inv \ol S(\ol{h_1})\ol{h_2} e_j)\\
  &=(e_i, U\inv \ol{S(h_1)h_2} e_j) =\eps(h)(e_i, U\inv e_j) = \eps(h)\bb(v_i,v_j)\,.
  \end{align*}
  Therefore, $\bb$ is a non-zero $H$-invariant form on $V$. Moreover,
  \begin{align*}
  \bb(v_j, g v_i)&= (e_j, U\inv \ol g e_i)=(e_j, (U\inv)^t U^t U\inv \ol g e_i) \\
  &= c(e_j, (U\inv)^t  e_i) = c(U\inv e_j,  e_i) = c \bb(v_i, v_j)\,.
  \end{align*}
  It follows from \eqref{eq:nu} that $c=\nu_2(V)$.

  Let $\{u_i\}$ be the basis for $V$ such that $\bb(u_i, v_j)=\delta_{ij}$.  Then
  $$
  \qdimr(V) = \chi_V(g) = \sum_i \bb(gu_i, v_i) = \sum_i \bb(u_i, g\inv v_i) =\chi_V(g\inv) = \qdim(V)\,.
  $$
  The third equality is a consequence of the $H$-invariance of $\bb$.

  The left hand side of the first equation of \eqref{eq:gamma2} indicates the expression on the right hand side is independent of the choice of the pivotal element $g$ of $H$.
\end{proof}
\begin{remark}
For the quantum group $u_q(sl_2)$ at the primitive $n$th root of unity $q$ with $n$ odd, Jedwab has shown in \cite{J} that $\gamma(V)=(-1)^{\dim V+1} \qdim V$ for every simple module $V$ of dimension less than $n$. This result together with Proposition \ref{p:product} implies that $\nu_2(V) = (-1)^{\dim V+1}$ for $\dim V < n$.
\end{remark}

%------------------------------------------------------------------------------------------------------
\section{On the values of the indicators}\label{values}
In Section \ref{taft}, we have seen that the value of $\nu_2(T)$ for $T$ a Taft algebra over $\BC$ is a complex number. However, if $H$ is a semisimple Hopf algebra over $\BC$ with antipode $S$, then $\nu_2(H)=\Tr(S)$ is always an integer. This observation follows immediately by the Larson-Radford theorem \cite{LR}, $S^2 = \id_H$ which implies that all of the eigenvalues of $S$ are $\pm 1$.

\subsection{Positivity of $\Tr(S)$ for abelian extensions of Hopf algebras}
 For a group algebra $H=\BC G$,  it is easy to see that $\Tr(S)$ is equal to the number $i_G$ of \emph{involutions} in $G$. Therefore, $\nu_2(H)=i_G \ge 1$. More generally, we have the following observation.
 \begin{prop}\label{known}
 Let $H = \BC^G\#\BC F$ be a bismash product determined by the factorizable group
$L = FG.$ Then $\Tr(S_H) = \Tr(S_{\BC L}) = i_L.$ In particular, if $H = D(\BC G)$, then $\Tr(S_H) =  i_G^2.$
\end{prop}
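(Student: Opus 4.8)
The plan is to reduce the two asserted equalities to separate, tractable computations. The second equality $\Tr(S_{\BC L})=i_L$ is immediate: $\Tr(S_{\BC L})=\nu_2(\BC L)$, and the group formula $\nu_n(\BC L)=\#\{\ell\in L\mid \ell^n=1\}$ recalled above gives $\nu_2(\BC L)=\#\{\ell\in L\mid \ell^2=1\}=i_L$. So the real content is the first equality $\Tr(S_H)=i_L$, which I would obtain by a direct trace computation on the canonical basis of the bismash product.

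Recall that the exact factorization $L=FG$ (with $F\cap G=1$) yields a matched pair of actions: for $a\in G$ and $x\in F$ one writes the product $ax\in L$ uniquely as $ax=(a\triangleright x)(a\triangleleft x)$ with $a\triangleright x\in F$ and $a\triangleleft x\in G$, where $\triangleright$ is a left action of $G$ on $F$ and $\triangleleft$ a right action of $F$ on $G$. With respect to the basis $\{p_a\# x\mid a\in G,\ x\in F\}$ of $H=\BC^G\#\BC F$, the antipode of the bismash product is $S(p_a\# x)=p_{(a\triangleleft x)^{-1}}\#(a\triangleright x)^{-1}$; since $H$ is a genuine bismash product (no nontrivial cocycle), $S$ permutes this basis with coefficient $1$. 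Hence $\Tr(S_H)$ counts the basis vectors fixed by $S$, that is, the pairs $(a,x)$ with $a\triangleleft x=a^{-1}$ and $a\triangleright x=x^{-1}$.

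The crux is then to identify this fixed-point condition group-theoretically. For such a pair, $ax=(a\triangleright x)(a\triangleleft x)=x^{-1}a^{-1}=(ax)^{-1}$, so $(ax)^2=1$; conversely, if $(ax)^2=1$ then $x^{-1}a^{-1}=(ax)^{-1}=ax=(a\triangleright x)(a\triangleleft x)$, and the uniqueness of the factorization $L=FG$ forces $a\triangleright x=x^{-1}$ and $a\triangleleft x=a^{-1}$. Since $(a,x)\mapsto ax$ is a bijection $G\times F\to L$ (as $L=FG$ implies $L=GF$ with unique factorizations, by a cardinality argument), I conclude
$$\Tr(S_H)=\#\{(a,x)\in G\times F\mid (ax)^2=1\}=\#\{\ell\in L\mid \ell^2=1\}=i_L,$$
which gives both the first equality and, together with the second, the full claim.

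For the Drinfeld double, I would realize $D(\BC G)=\BC^G\#\BC G$ as the bismash product attached to $L=G\times G$ with $F=\{(g,g)\}$ the diagonal and $G$-factor $\{(a,1)\}$: one checks that the induced actions are the conjugation action of the diagonal together with the trivial action, matching the double. Then $i_L=i_{G\times G}=i_G^2$, since $(g,h)^2=1$ iff $g^2=h^2=1$. The main obstacle I anticipate is purely bookkeeping: fixing the matched-pair conventions consistently, confirming the stated antipode formula (equivalently $S^2=\id$, which holds because bismash products of groups are involutory), and verifying the identification of $D(\BC G)$ with the claimed factorization. Once the conventions are pinned down, the count is forced by the uniqueness of the factorization, and the value is the nonnegative integer $i_L$.
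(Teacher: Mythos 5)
Your argument is correct, but it is worth noting how it relates to what the paper actually does: the paper disposes of the first equality by citing \cite[Lemma 2.8]{JM} and of the second by observing that for $D(\BC G)$ the underlying group is $L=G\rtimes G$ (conjugation action) with $i_L=i_G^2$. What you have written is, in effect, a self-contained proof of the cited lemma: since the bismash product has no cocycle, $S$ permutes the basis $\{p_a\# x\}$ with coefficients $1$, so $\Tr(S_H)$ counts fixed basis vectors; the fixed-point condition $a\triangleleft x=a^{-1}$, $a\triangleright x=x^{-1}$ is equivalent to $(ax)^2=1$ by uniqueness of the factorization $L=FG$; and $(a,x)\mapsto ax$ is a bijection $G\times F\to L$. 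All of these steps are sound (the converse direction correctly uses that $x^{-1}a^{-1}$ is already in $FG$-normal form), and the count is manifestly convention-independent, so the bookkeeping worry you raise is harmless: any consistent matched-pair convention gives an antipode permuting the basis, and replacing $H$ by $H^{\mathrm{op}}$ or $H^{\mathrm{cop}}$ does not change $\Tr(S)$. Your description of the double via $L=G\times G=\Delta(G)\cdot(G\times 1)$ is the same factorization as the paper's $G\rtimes G$ up to the standard isomorphism $(a,h)\mapsto (ah,h)$, and in either picture $i_L=i_G^2$. The only thing your write-up adds beyond the paper is the verification that this factorization really does induce $D(\BC G)$ (trivial action of the $G$-factor on the diagonal, conjugation the other way), which you correctly identify as the one point requiring a convention check; the trace computation itself is insensitive to it.
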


\begin{proof} The first part is \cite[Lemma 2.8]{JM}. In the case of $D(\BC G)$,
$L = G \rtimes G$, with $G$ acting on itself by conjugation, and it is easy to see that $i_L = i_G^2.$
This is noted in \cite{GM}.
\end{proof}

From this proposition, one might hope that if $H$ is semisimple over $\mathbb C$,
then $\Tr(S)$ is always positive. However, this is not the case, even for group-theoretical Hopf algebras.
\begin{ex} {\rm Let $H=\BC^{G}\#_{\sigma }^{\tau }\BC F$ where $G=\langle x\rangle \times \langle y\rangle$ and  $F=\langle t \rangle$ are multiplicative groups isomorphic to $\BZ_4 \times \BZ_4$ and $\BZ_2$ respectively, and the coaction $\rho$ and the cocycle
$\sigma: F \times F \to \BC^G$ are trivial. Let $\{p_g\mid g \in G\}$ denote the dual basis of $G$ for $\BC^G$. The $F$-action
$\rightharpoonup$ on $G$, and dual cocycle $\tau: F \to \BC^G \ot \BC^G $ are
defined via
$$
t \rightharpoonup g =g^{-1}  \text{ for all }g\in G, \quad \text{ and }\quad
\tau \left( t\right)  =\sum_{g, h \in G}\tau _{t}(g,h) p_g \ot
p_h
$$
where $\tau_t(x^iy^j, x^k y^l) = (-1)^{ik+jl+jk}$.
One can verify directly that $\tau_t : G \times G \to \BC^\times$ is a 2-cocycle on $G$, that is, $\tau_t$ satisfies the functional equation $$ \tau_t(a,b)\tau_t(ab, c) = \tau_t(a, bc)\tau_t(b,c) \text{ for all }a,b,c \in G.
$$
which makes $\tau$ a dual cocycle (see \cite[Proposition 2.16]{AD} or \cite[Lemma 4.5]{KMM} for the particular case of cocentral abelian extensions). Moreover, the equality $$ \tau_t(a,b)\tau_t(t\rightharpoonup a, t \rightharpoonup b)=1 =\tau_1(a,b) $$ holds for all $a, b \in G$ and therefore comultiplication in $H$ is multiplicative. Then it follows from \cite[Theorem 2.20]{AD} that $H$ is a Hopf algebra. The antipode $S$ of $H$ is given by
$$
S(p_g \# z) = {\tau_z(g\inv, g)}\inv p_{z\inv \rightharpoonup g\inv}\# z\inv =
{\tau_z(g\inv, g)} p_{z \rightharpoonup g\inv}\# z\,.
$$
for $z \in F$ and $g \in G$. Note that $z \rightharpoonup g\inv =g$ if, and only if, $z=t$ or $g^2=1$. Therefore,
$$
\Tr(S) = \sum_{g \in G} \tau_t(g, g^{-1})+\sum_{\substack{g \in G \\ g^2=1}} \tau_1(g, g^{-1}) \,.
$$
It is easy to see that $x^2, y^2, x^2 y^2$ and $1$ are all of the involutions of $G$, and thus
$\sum\limits_{\substack{g \in G \\ g^2=1}} \tau_1(g, g^{-1})=4$. Therefore,
\begin{align*}
\Tr(S) &= \sum_{g \in G} \tau_t(g, g^{-1})+4 = \sum_{i,j=0}^3 \tau_t(x^i y^j, x^{-i} y^{-j})+4\\
&= \sum_{i,j=0}^3 (-1)^{-i^2-j^2-ij} +4 = 4-12+4 =-4
\end{align*}
since
\begin{equation*}
\left( -1\right) ^{i^{2}+j^{2}+ij}=\left\{
\begin{array}{ll}
1 & \text{ if }i\text{ and }j\text{ are both even,} \\
-1 & \text{ otherwise.}%
\end{array}%
\right. \qed
\end{equation*}
}\end{ex}
It is still interesting to know just when $\Tr(S)$ is positive. We
 give some criteria for a cocentral abelian extensions to have
$\Tr(S)>0$ in the following proposition.
\begin{prop} \label{p:5.3}
  Let $G$, $F$ be finite groups and $H=\BC^{G}\#_{\sigma }^{\tau }\BC F$ a
cocentral abelian extension with antipode $S$.
\begin{enumerate}
  \item[\rm (i)] If $|F|$ is odd, then $\Tr(S) = i_G$.
  \item[\rm (ii)] Let $I_F$ denote the set of all involution in $F$, $F_g$ the stabilizer of $g$ and
      $$
      \widetilde {F_g} = \{ w\in F \mid w\rightharpoonup g=g^{\pm 1}\}.
       $$
       Assume that the cocycle $\sigma$ is trivial, the set $I_F$ is a subgroup of $F$, and $F_g = \widetilde {F_g}$ for all $g \in G$. Then $\Tr(S) > 0$.
\end{enumerate}
\end{prop}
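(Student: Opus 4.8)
The plan is to compute $\Tr(S)$ directly from the explicit antipode formula for a cocentral abelian extension and then analyze when the resulting sum of cocycle values is positive. First I would recall that for $H = \BC^G \#_\sigma^\tau \BC F$ the antipode acts on the basis $\{p_g \# z\}$ by a formula analogous to the one exhibited in the preceding example, so that a diagonal entry (contributing to the trace) arises precisely from those pairs $(g,z)$ for which $z \rightharpoonup g\inv = g$, i.e. $z \rightharpoonup g = g\inv$. Thus I would write
\begin{equation*}
\Tr(S) = \sum_{\substack{(g,z) \in G \times F \\ z \rightharpoonup g = g\inv}} c(g,z)
\end{equation*}
where $c(g,z)$ is the scalar (a product of $\sigma$- and $\tau$-cocycle values) appearing in $S(p_g \# z)$. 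For part (i), when $|F|$ is odd the only way to have $z \rightharpoonup g = g\inv$ together with the order constraints is severely limited: I would argue that the relevant $z$ must satisfy $z^2$ fixing $g$ in a way forcing $z = 1$ (since an odd-order automorphism sending $g$ to $g\inv$ combined with the involutive nature of inversion collapses), so that only the $z=1$ terms survive, and these count exactly the involutions $g$ with $g^2 = 1$, giving $\Tr(S) = i_G$ after checking the cocycle normalization $c(g,1) = 1$.

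For part (ii) the hypotheses are arranged to make the sum manifestly positive. With $\sigma$ trivial, the scalar $c(g,z)$ reduces to the single $\tau$-value $\tau_z(g\inv, g)$ (up to normalization), and I would reorganize the trace by grouping the contributing pairs. The key structural input is the assumption $F_g = \widetilde{F_g}$, which says that any $w$ with $w \rightharpoonup g = g^{\pm 1}$ in fact fixes $g$; combined with $I_F$ being a subgroup, this lets me pair up contributions. Concretely, I expect that the condition $z \rightharpoonup g = g\inv$ forces $z$ to lie in a coset of $F_g$ controlled by the involutions, and that the corresponding $\tau$-values organize into sums of the form $\sum_{z} \tau_z(g\inv,g)$ over a subgroup, which by the cocycle (functional) equation and the reality of the values amounts to a nonnegative real number. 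The plan is to show each such inner sum is a sum over a group of characters or of $\pm 1$ values that, because of the subgroup structure forced by $I_F \le F$, cannot cancel to something nonpositive overall, yielding $\Tr(S) > 0$.

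The main obstacle will be part (ii): controlling the signs of the $\tau$-cocycle values and proving genuine positivity rather than mere nonnegativity. Unlike the group-algebra case where $\Tr(S) = i_G$ is a count of involutions and positivity is automatic, here the $\tau_z(g\inv,g)$ are roots of unity and could a priori conspire to cancel, exactly as they did in the Example producing $\Tr(S) = -4$. The hypotheses $F_g = \widetilde{F_g}$ and ``$I_F$ a subgroup'' are precisely what rule out that cancellation, so the heart of the argument is to extract, from the abelian-extension cocycle conditions, an identity showing that for each fixed $g$ the partial sum $\sum_{z \,:\, z \rightharpoonup g = g\inv} \tau_z(g\inv,g)$ equals $|I_F|$ or some other strictly positive quantity, and that the $g = g\inv$ (involution) contributions with $z$ ranging over $I_F$ dominate. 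I would isolate this as the crux, proving it via the $2$-cocycle relation for $\tau_z$ restricted to the group $\langle g\rangle$ and the compatibility condition between $\tau$ and the $F$-action, and then summing over $g$ to conclude $\Tr(S) > 0$.
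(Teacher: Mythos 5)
Your overall strategy (read off the diagonal of $S$ on the basis $\{p_g\#z\}$, then reorganize the surviving cocycle values into character sums over subgroups) is the same as the paper's, but there are two genuine gaps. First, your trace formula is missing a condition: since $S(p_g\#z)$ is a scalar times $p_{z\inv\rightharpoonup g\inv}\#z\inv$, a diagonal contribution requires \emph{both} $z\inv\rightharpoonup g\inv=g$ \emph{and} $z\inv=z$, i.e.\ $z^2=1$. The correct sum is over pairs $(g,w)$ with $w^2=1$ and $w\rightharpoonup g=g\inv$. Omitting $z^2=1$ breaks part (i): your argument that an odd-order $z$ sending $g$ to $g\inv$ "collapses" only shows that $z$ fixes $g$ and $g^2=1$, not that $z=1$ (take $F$ of odd order acting trivially on $G$: every $(g,z)$ with $g^2=1$ satisfies your condition, and your formula would give $i_G\cdot|F|$, not $i_G$). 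The paper gets $z=1$ for free because an odd-order group has no nontrivial involutions, so the constraint $w^2=1$ alone does the work.

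Second, for part (ii) you correctly see that $F_g=\widetilde{F_g}$ forces every contributing $g$ to satisfy $g^2=1$ (so $\tau_w\inv(g,g\inv)=\tau_w\inv(g,g)$ and the inner sum runs over $w\in I_F\cap F_g$), but the crux you leave open — why these root-of-unity values cannot cancel — has a specific answer you do not supply: under the hypotheses ($\sigma$ trivial, $F_g=\widetilde{F_g}$) the map $\mu_g(w)=\tau_w(g,g)$ is a one-dimensional character of $F_g$ (this is \cite[Lemma 4.5]{KMM}), so by orthogonality $\sum_{w\in I_F\cap F_g}\mu_g\inv(w)$ equals $|I_F\cap F_g|$ if $\mu_g$ is trivial on the subgroup $I_F\cap F_g$ and $0$ otherwise — here is where "$I_F$ is a subgroup" enters. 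In particular your expectation that each inner sum is strictly positive is wrong; some can vanish. Strict positivity of the total comes from the $g=1$ term, which contributes $|I_F|\ge 1$ since $\mu_1$ is trivial. Without identifying the character structure and the $g=1$ term, the argument does not close.
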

\begin{proof}
  Recall that
$$
\Tr(S) =\sum_{\substack{(g,w)\in G \times F \\ w^2=1,\, w\rightharpoonup g =
g^{-1}}}
\sigma_g^{-1}(w,w)\tau_{w}^{-1}(g,g^{-1})\,.
$$
(i) If $|F|$ is odd, then we have
$$
  \Tr(S)= \sum_{g \in G,\, g = g^{-1}}
\sigma_g^{-1}(1,1)\tau_{1}^{-1}(g,g^{-1}) = \sum_{g\in G,\, g^2=1
} 1 = i_G.
$$
(ii) Let $g\in G$. Define $\mu _g : \BC F_g \to \BC$ via $\mu _g (w)
=\tau_{w}(g,g)$ for $w\in F_g$. Then by \cite[Lemma 4.5]{KMM}, since
$\sigma$ is trivial and $\widetilde {F_g} = F_g$, $\mu _g$ is a
one-dimensional character of $\BC F_g$. Then
$$
\Tr(S) =\sum_{\substack{(g,w)\in G \times F \\ w^2=1,\, w\rightharpoonup g =
g^{-1}}} \tau_{w}^{-1}(g,g^{-1})
=\sum_{\substack{(g,w)\in G \times F \\ w^2=1,\,g^2=1,\, w\rightharpoonup g =
g}}\tau_{w}^{-1}(g,g)
= \sum_{g\in I_G }\sum_{w\in I_F \cap F_g } \mu_{g}^{-1}(w)\,.
$$
By the orthogonality of group characters, we find
\begin{equation*}
 \sum_{w\in I_F \cap F_g } \mu_{g}^{-1}(w) = \delta |I_F \cap F_g|
\end{equation*}
where $\delta=1$ if $\mu_g|_{I_F \cap F_g} = 1$, and $\delta =0$ otherwise.
Therefore, $\Tr(S) > 0$.
\end{proof}
\begin{remark}
{\rm\mbox{\hspace{1cm}}
\begin{enumerate}
\item[(i)] If $F$ is abelian then $I_F$ is a subgroup of $F$.
\item[(ii)]  If $F$ is cyclic then we may assume that the cocycle $\sigma$ is trivial (since the group $H^2 (F,(\BC^{G})^{\times})$ is trivial).
\item[(iii)]  In particular, if $F$ is cyclic and $G$ is an elementary abelian 2-group then the conditions of Proposition  \ref{p:5.3} are satisfied and therefore $\Tr(S) > 0$.
\end{enumerate}
}
\end{remark}
\subsection{Positivity of the indicators of modular quasi-Hopf algebras}
 A semisimple quasi-triangular quasi-Hopf algebra $H$ over $\BC$ is said to be \emph{modular} if the braided spherical fusion category $\Mod{H}$ is a modular tensor category (cf. \cite{ENO} and \cite{BaKi}).  By \cite{Mu}, $\Mod{D(H)}$ is  modular for any semisimple quasi-Hopf algebra $H$ over $\BC$, where $D(H)$ denotes the quantum double (or Drinfeld double) of $H$. Note that every semisimple factorizable Hopf algebra $H$ over $\BC$ is modular (cf. \cite{Tak01}).

The $n$-th Frobenius-Schur indicator $\nu_n(V)$ of an object $V$  in a spherical fusion category $\CC$ over $\BC$ is defined in \cite[p 71]{NS07b}. In addition, if $\CC$ is modular, some canonical linear combination of indicators are always real and non-negative.
\begin{prop}\label{p1}
Let $\CC$ be a modular tensor category over $\BC$, and $U_0, \dots, U_\ell$
a complete list of non-isomorphic simple objects of $\CC$ with $U_0=I$, the unit object.
Then
$$
\sum_i d_i \nu_n(U_i) \ge 0
$$
for all positive integers $n$, where $d_i$ and $\nu_n(U_i)$ respectively denote the pivotal dimension and
the $n$-th Frobenius-Schur indicator of $U_i$.
\end{prop}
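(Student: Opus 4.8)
The plan is to reduce the statement to the non-negativity of a squared modulus by invoking a Bantay-type formula for the higher indicators in terms of the modular data of $\CC$. Write $N_{ij}^k$ for the fusion coefficients and $\theta_i$ for the twist (ribbon eigenvalue) on $U_i$, and recall the standard facts about a modular tensor category over $\BC$ (cf. \cite{BaKi}): $\dim\CC=\sum_i d_i^2>0$, each $d_i$ is a real number, each $\theta_i$ is a root of unity (so $|\theta_i|=1$), and $d_{i^*}=d_i$, $\theta_{i^*}=\theta_i$. The generalized Bantay formula for the higher indicators (cf. \cite{NS07b}) expresses the $n$-th indicator of a simple object as
\begin{equation}\label{eq:bantay}
\nu_n(U_k) = \frac{1}{\dim\CC}\sum_{i,j} N_{ij}^{k}\, d_i d_j \left(\frac{\theta_i}{\theta_j}\right)^n.
\end{equation}
I would take \eqref{eq:bantay} as the starting point.

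Next I would multiply \eqref{eq:bantay} by $d_k$ and sum over $k$. Since the only $k$-dependence on the right lies in $N_{ij}^k$, I would use that the quantum dimension is a character of the Grothendieck ring, i.e. $\sum_k N_{ij}^k d_k = d_i d_j$, to collapse the sum over $k$, obtaining
\begin{equation}\label{eq:factor}
\sum_k d_k\,\nu_n(U_k) = \frac{1}{\dim\CC}\sum_{i,j} d_i^2 d_j^2 \left(\frac{\theta_i}{\theta_j}\right)^n = \frac{1}{\dim\CC}\left(\sum_i d_i^2\theta_i^n\right)\left(\sum_j d_j^2\theta_j^{-n}\right).
\end{equation}
Setting $z_n=\sum_i d_i^2\theta_i^n$, I would then observe that each $d_i^2$ is real and $\theta_i^{-n}=\overline{\theta_i^{\,n}}$ because $|\theta_i|=1$, so the second factor of \eqref{eq:factor} is exactly $\overline{z_n}$. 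Hence the right-hand side equals $|z_n|^2/\dim\CC$, which is non-negative since $\dim\CC>0$, giving $\sum_i d_i\,\nu_n(U_i)\ge 0$ for every $n$.

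As a consistency check I would verify $n=1$: there $z_1=\sum_i d_i^2\theta_i=p^+$ is the Gauss sum of $\CC$, and the identity $p^+p^-=\dim\CC$ with $p^-=\overline{p^+}$ gives $|z_1|^2/\dim\CC=1$, matching $\sum_i d_i\nu_1(U_i)=d_0=1$. The main obstacle is to have \eqref{eq:bantay} available in exactly the right form: one must pin down the placement of duals (whether $N_{ij}^k$ or $N_{ij^*}^k$ appears) and confirm the factor $(\theta_i/\theta_j)^n$, since the argument relies on the single free index $k$ sitting inside a fusion coefficient so that summation against $d_k$ returns $d_id_j$. Fortunately $d_{j^*}=d_j$ together with the fact that $k\mapsto k^*$ merely permutes the simple objects means that any such variant collapses identically and leaves the weighted sum unchanged, so this is a presentational rather than a substantive difficulty; the genuinely essential inputs are the reality of the $d_i$ and that the $\theta_i$ are roots of unity, both standard for modular categories over $\BC$.
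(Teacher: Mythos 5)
Your proof is correct and follows essentially the same route as the paper's: both start from the Bantay-type formula $\nu_n(U_k)=\frac{1}{\dim\CC}\sum_{i,j}N_{ij}^k d_i d_j(\theta_i/\theta_j)^n$ (the paper cites it as Theorem 7.5 of \cite{NS07a}), sum against $d_k$ using $\sum_k N_{ij}^k d_k=d_id_j$, and factor the result as $\bigl|\sum_i d_i^2\theta_i^n\bigr|^2/\dim\CC\ge 0$ using the reality of the $d_i$ and $\overline{\theta_i}=\theta_i^{-1}$. Your added $n=1$ consistency check and the remark on the placement of duals in the fusion coefficient are harmless refinements of the same argument.
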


\begin{proof} Let $\theta$ be the ribbon structure of the modular category $\CC$.
By \cite[Theorem 7.5]{NS07a},
  $$
  \nu_n(U_k) = \frac{1}{\dim \CC} \sum_{i,j} N_{ij}^k d_i d_j \frac{\om_i^n}{\om_j^n}
  $$
  where $N_{ij}^k = \dim \CC(U_k, U_i \ot U_j)$ and $\om_i \in \BC$ is given by $\om_i \id_{U_i} = \theta_{U_i}$. In particular, $\om_i$ is an $n$-th root of unity where $n$ is the order of $\theta$. Note that
  $\sum_k N_{ij}^k d_k =d_i d_j$, and $d_k \in \BR$ for all $k$ by \cite{ENO}.
  We have
  \begin{multline*}
    \sum_k d_k \nu_n(U_k) = \frac{1}{\dim \CC} \sum_{i,j, k} N_{ij}^k d_i d_j d_k \frac{\om_i^n}{\om_j^n} \\
    = \frac{1}{\dim \CC} \sum_{i,j}  d_i^2 d_j^2  \frac{\om_i^n}{\om_j^n}
    = \frac{1}{\dim \CC} \left|\sum_i d^2_i \om_i^n\right|^2 \ge 0\,,
  \end{multline*}
where in the last equation we have used that $\overline{\om_i} = \om_i\inv$. \qedhere
\end{proof}

\begin{thm} \label{t:pos}
  Let $H$ be a modular quasi-Hopf algebra over $\BC$. Then  for all positive integers $n$, the $n$-th Frobenius-Schur indicator $\nu_n(H)$ is real and non-negative. In addition, \begin{enumerate}
  \item[\rm (i)] if $H=D(A)$ for some semisimple quasi-Hopf algebra $A$ over $\BC$ then
  $$
  \nu_n(H) = |\nu_n(A)|^2 \ge 0\,.
  $$
\item[\rm (ii)] If $H$ is a semisimple factorizable Hopf algebra over $\BC$ with antipode $S_H$ then $\Tr(S_H) \ge 0$.
\item[\rm (iii)] If $H =D(A)$ for some semisimple Hopf algebra $A$ then $\Tr(S_H)=\Tr(S_A)^2$.
\end{enumerate}

\end{thm}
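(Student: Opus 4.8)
The strategy is to reduce the opening (unnumbered) assertion to Proposition \ref{p1} and then to read off (i)--(iii) from it. Since $H$ is semisimple over $\BC$, $\Mod{H}$ is a modular fusion category; write $U_0=I,U_1,\dots,U_\ell$ for its simple objects. The regular representation decomposes as $\bigoplus_i U_i^{\oplus \dim_\BC U_i}$, so by additivity of the higher indicator over direct sums
$$
\nu_n(H)=\sum_i (\dim_\BC U_i)\,\nu_n(U_i).
$$
The first point I would check is that $\Mod{H}$, being an integral fusion category over $\BC$, is pseudo-unitary, so that in its canonical spherical structure the pivotal dimension $d_i$ of $U_i$ is positive and equals the vector-space dimension, $d_i=\dim_\BC U_i$ (cf. \cite{ENO}); this is the structure with respect to which $\nu_n(U_i)$ are computed and $\Mod H$ is modular. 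Hence $\nu_n(H)=\sum_i d_i\,\nu_n(U_i)$, and Proposition \ref{p1} yields at once that $\nu_n(H)$ is real and non-negative.

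For (i) I put $\CC=\Mod{A}$, so that $\Mod{H}=\Mod{D(A)}=Z(\CC)$ is modular with $\dim Z(\CC)=(\dim A)^2$. Substituting the formula $\nu_n(U_k)=\frac{1}{\dim Z(\CC)}\sum_{i,j}N_{ij}^k d_id_j\,\om_i^n\om_j^{-n}$ from the proof of Proposition \ref{p1} into $\nu_n(D(A))=\sum_k d_k\nu_n(U_k)$ and using the multiplicativity $\sum_k N_{ij}^k d_k=d_id_j$, the triple sum collapses into a product:
$$
\nu_n(D(A))=\frac{1}{\dim Z(\CC)}\Big(\sum_i d_i^2\om_i^n\Big)\Big(\sum_j d_j^2\om_j^{-n}\Big)=\frac{1}{(\dim A)^2}\Big|\sum_k d_k^2\om_k^n\Big|^2,
$$
where $\om_j^{-n}=\overline{\om_j^n}$ because each $\om_j$ is a root of unity. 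It then remains to identify the normalized Gauss sum of the center: by the central (Gauss-sum) description of the higher indicators of the spherical fusion category $\CC$ one has $\nu_n(A)=\frac{1}{\dim A}\sum_k d_k^2\om_k^n$ (for $A=\BC G$ this reduces, via the regular character of each centralizer $C_G(g)$, to $\nu_n(A)=\#\{h\in G: h^n=1\}$). Substituting gives $\nu_n(D(A))=|\nu_n(A)|^2\ge0$, which is (i).

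Parts (ii) and (iii) follow quickly. For (ii), a semisimple factorizable Hopf algebra $H$ over $\BC$ is modular (cf. \cite{Tak01}), so the main assertion at $n=2$, together with $\nu_2(H)=\Tr(S_H)$, gives $\Tr(S_H)\ge0$. For (iii) I apply (i) at $n=2$: $\nu_2(D(A))=|\nu_2(A)|^2$; since $A$ is an ordinary semisimple Hopf algebra, the Larson--Radford theorem gives $S_A^2=\id$, whence $\nu_2(A)=\Tr(S_A)$ is a real integer and $|\nu_2(A)|^2=\Tr(S_A)^2$. As $\nu_2(D(A))=\Tr(S_{D(A)})$, this is exactly $\Tr(S_{D(A)})=\Tr(S_A)^2$.

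I expect the main obstacle to be the single input used in (i): the identity $\nu_n(A)=\frac{1}{\dim A}\sum_k d_k^2\om_k^n$ expressing the $n$-th indicator of the \emph{non-modular} spherical category $\Mod{A}$ as a normalized Gauss sum over its modular center $Z(\Mod{A})$. Everything else is formal manipulation of Proposition \ref{p1}; the care required here is twofold, namely establishing (or citing) this central description and checking that the pivotal dimensions in $\Mod{A}$ and in $Z(\Mod{A})$ are normalized with the same positive convention, so that the two Gauss sums genuinely match.
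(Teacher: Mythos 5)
Your proposal is correct and follows essentially the same route as the paper: decompose the regular representation as $\bigoplus_i d_i U_i$, apply Proposition \ref{p1} and the Gauss-sum identity $\nu_n(H)=\bigl|\tfrac{1}{\sqrt{\dim H}}\sum_i d_i^2\om_i^n\bigr|^2$ from its proof, and then specialize for (i)--(iii). The one input you flag as the ``main obstacle,'' namely $\nu_n(A)=\tfrac{1}{\dim A}\sum_k d_k^2\om_k^n$, is exactly what the paper supplies by citing \cite[Theorem 4.1]{NS07a} (via $\nu_n(A)=\tfrac{1}{\dim A}\Tr(\theta^n_{D(A)\ot_A A})$), so no new argument is needed there.
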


\begin{proof}
  Let $U_0, \dots, U_\ell$ be a complete list of non-isomorphic simple objects of $\CC=\Mod{H}$, where $U_0$ is the
  trivial $H$-module $\BC$. Then $H\cong \bigoplus_{i=0}^\ell d_i U_i$ where $d_i = \dim U_i$. By the additivity
  of indicators, we have
  $$
   \nu_n(H) = \sum_{i=0}^\ell d_i \nu_n(U_i) \,.
  $$
  It follows from the proof of Proposition \ref{p1} that
  \begin{equation} \label{eq:1}
  \nu_n(H)=  \left|\frac{1}{\sqrt{\dim H}}\sum_{i=0}^\ell d^2_i \om_i^n\right|^2 \ge 0
  \end{equation}
  for all $n \in \BN$, where $\om_i$ is the scalar of the ribbon structure component $\theta_{U_i}$.

  If $H=D(A)$ for some semisimple quasi-Hopf algebra $A$ over $\BC$, then by \cite[Theorem 4.1]{NS07a},
  $$
  \nu_n(A) = \frac{1}{\dim A} \Tr(\theta^n_{D(A)\ot_A A}) = \frac{1}{\dim A} \sum_{i=0}^\ell d_i^2 \om^n_i\,.
  $$
  It follows from \eqref{eq:1} that
  $$
  \nu_n(H) = |\nu_n(A)|^2\,.
  $$

  If $H$ is a semisimple factorizable Hopf algebra over $\BC$, then, by \cite[Theorem 3.1]{LM},
  $$
  \Tr(S_H)=\nu_2(H) \ge 0\,.
  $$
  Moreover, if $H=D(A)$ for some semisimple Hopf algebra $A$, then $\Tr(S_H)$ and $\Tr(S_A)$ are integers, and so we have
  $$
  \Tr(S_H) = \nu_2(H) = |\nu_2(A)|^2 = |\Tr(S_A)|^2=\Tr(S_A)^2\,.\qedhere
  $$
\end{proof}

We note that when $A = \BC G$ and $H=D(A)$, we already saw in Proposition \ref{known} that $\Tr(S_H) = i_G^2 = \Tr(S_A)^2.$ In general, we can ask the following question.\\

\noindent
{\bf Question:} For any  finite-dimensional complex Hopf algebra $H$, is it true that $\nu_n(D(H))=|\nu_n(H)|^2$ for all positive integer $n$?

\subsection{Prime divisors of the dimension of a semisimple quasi-Hopf algebra} It is well-known that a group $G$ has even order if, and only if, $i_G \ne 1 $. If $S$ denotes the antipode of the group algebra $H=\BC G$, then $\dim \BC G$ is even if, and only if, $\nu_2(H)  = \Tr(S) =i_G \ne 1$. This observation holds for every semisimple Hopf algebra over $\BC$.
\begin{prop} \label{p:5.6}
  Let $H$ be a semisimple Hopf algebra over $\BC$. Then $\nu_2(H) \ne 1$ if, and only if, $\dim H$ is even.
\end{prop}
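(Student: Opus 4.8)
The plan is to prove the equivalent statement that $\nu_2(H)=1$ if, and only if, $\dim H$ is odd, and to separate the two implications according to their difficulty. The implication $\nu_2(H)=1 \Rightarrow \dim H$ odd is a pure parity argument. By the Larson--Radford theorem \cite{LR} we have $S^2=\id_H$, so $S$ is diagonalizable with all eigenvalues in $\{1,-1\}$; writing $a$ and $b$ for the dimensions of the $(+1)$- and $(-1)$-eigenspaces, one has $\dim H=a+b$ and $\nu_2(H)=\Tr(S)=a-b$. Since $(a-b)-(a+b)=-2b$, this already yields the congruence $\nu_2(H)\equiv \dim H \pmod 2$. In particular $\nu_2(H)=1$ forces $\dim H$ to be odd, and, conversely, $\dim H$ even forces $\nu_2(H)$ to be even and hence $\neq 1$. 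This disposes of the "$\dim H$ even $\Rightarrow \nu_2(H)\neq 1$" half of the proposition outright, using nothing beyond $S^2=\id_H$.

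The substance is therefore the remaining implication $\dim H \text{ odd} \Rightarrow \nu_2(H)=1$. First I would rewrite $\nu_2(H)$ as a sum over the simple modules. Since $\nu_2(H)$ equals the second Frobenius--Schur indicator of the regular representation and $H\cong\bigoplus_V (\dim V)\,V$ as $H$-modules, where the sum runs over a complete set of non-isomorphic simple $H$-modules, additivity of the indicator gives $\nu_2(H)=\sum_V (\dim V)\,\nu_2(V)$. Because $\BC$ is algebraically closed every simple module is absolutely simple, so by the Frobenius--Schur dichotomy $\nu_2(V)\in\{0,\pm 1\}$, with $\nu_2(V)=0$ exactly when $V\not\cong V\du$, while the trivial module $V_0=\BC$ is self-dual with $\nu_2(V_0)=1$.

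The key step is then to show that when $\dim H$ is odd the trivial module is the only self-dual simple module, so that every other summand vanishes and $\nu_2(H)=\nu_2(V_0)\cdot 1=1$. This is precisely the content of the Hopf-algebra generalization, due to \cite{KSZ}, of the classical fact that a finite group with a nontrivial self-dual irreducible representation has even order: a semisimple Hopf algebra over $\BC$ possessing a nontrivial self-dual simple module must have even dimension. Taking the contrapositive with $\dim H$ odd shows that no nontrivial simple module is self-dual, which completes the argument; equivalently, one may invoke the $p=2$ instance of the prime-degree indicator theorem of this section.

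I expect this structural fact to be the real obstacle. The parity computation and the decomposition of $\nu_2(H)$ are formal, whereas ruling out nontrivial self-dual simple modules in odd dimension is exactly the nonelementary input, and it cannot be extracted from $S^2=\id_H$ alone, since parity yields only that $\nu_2(H)$ is odd, not that it equals $1$. A self-contained treatment would have to reprove it --- for instance by passing to the Drinfeld double $D(H)$, whose module category is modular, and exploiting the indicator formula and the reality relation $\overline{\om_i}=\om_i\inv$ used in Proposition \ref{p1} and Theorem \ref{t:pos}, together with a Cauchy-type congruence --- but for the proposition as stated the cited result is enough.
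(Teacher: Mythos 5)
Your proof is correct and follows essentially the same route as the paper's: the even-dimensional direction is handled by the Larson--Radford theorem and the parity of the $\pm 1$ eigenvalues of $S$, and the odd-dimensional direction by the formula $\nu_2(H)=\sum_V \dim V\cdot\nu_2(V)$ from \cite{LM} combined with the result of \cite{KSZ} that only the trivial simple module is self-dual when $\dim H$ is odd. The only difference is cosmetic: you organize the argument around the congruence $\nu_2(H)\equiv\dim H\pmod 2$ and flag the \cite{KSZ} input as the genuinely nonelementary step, which is an accurate assessment.
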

\begin{proof}
It is proved in \cite{KSZ} that $\dim H$ is odd if, and only if, the  trivial $H$-module $V_0$ is the only self-dual simple $H$-module. Therefore,  if $\dim H$ is odd, $V_0$ is the only simple $H$-module $V$ with $\nu_2(V) \ne 0$. By \cite{LM}, we have
\begin{equation}\label{LM1}
 \Tr(S)=\nu_2(H)= \sum_{V \text{simple}} \dim V \cdot \nu_2(V)
\end{equation}
which implies
$\nu_2(H)=\dim (V_0)\cdot \nu_2( V_0 ) = 1$. Conversely, assume $\dim(H)$ is even. As noted above, any eigenvalue of $S$ is 1 or -1. Suppose $a$ of them are $1$ and $b$ of them are $-1$. Then $a+b=\dim H$ is even, so $a$ and $b$
have the same parity and therefore $\nu_2(H)=\Tr (S) = a-b$ is also even. Thus $\nu_2(H) \neq 1$.
\end{proof}

Using some recent results of \cite{NS07a},  the preceding proposition can be generalized to any prime number $p$ for any semisimple quasi-Hopf algebra over $\BC$.
\begin{thm} \label{t:primeind}
 Let $H$ be a finite-dimensional semisimple quasi-Hopf algebra over $\BC$ and $p$ a prime number. Then the following statements are equivalent:
 \begin{enumerate}
 \item[(i)] $p \mid \dim H$.
 \item[(ii)] $\nu_p(V) \ne 0$ for some non-trivial simple $H$-module $V$.
 \item[(iii)] $\nu_p(H) \ne 1$.
 \end{enumerate}
\end{thm}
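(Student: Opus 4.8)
The plan is to establish the cycle of implications $(i)\Rightarrow(iii)\Rightarrow(ii)\Rightarrow(i)$, using three inputs about the fusion category $\CC=\Mod{H}$ with complete set of simple objects $U_0=I, U_1,\dots,U_\ell$ (with $U_0$ the unit): first, the additivity of indicators over the regular object, $\nu_p(H)=\sum_i d_i\,\nu_p(U_i)$ with $d_i=\dim U_i$, exactly as used in the proof of Theorem~\ref{t:pos}; second, the Galois symmetry recorded in \cite{NS07a}, namely that each $\nu_n(V)\in\BZ[\zeta_N]$ where $N$ is the Frobenius--Schur exponent of $\CC$, that $\nu_n(V)$ depends only on $n\bmod N$, and that $\sigma_a(\nu_n(V))=\nu_{an}(V)$ for every $a$ coprime to $N$; and third, the Cauchy theorem of \cite{NS07a}, that a rational prime $p$ divides $\dim H$ if and only if $p\mid N$.

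The implication $(iii)\Rightarrow(ii)$ is immediate from additivity. Since $\dim U_0=1$ and $\nu_p(I)=1$, we have $\nu_p(H)=1+\sum_{i\ge1}d_i\,\nu_p(U_i)$, so $\nu_p(H)\ne1$ forces $\nu_p(U_i)\ne0$ for at least one non-trivial simple $U_i$.

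For $(ii)\Rightarrow(i)$ I would argue contrapositively. If $p\nmid\dim H$, then $p\nmid N$ by the Cauchy theorem, so $p$ is invertible modulo $N$ and there is an integer $a$ with $ap\equiv1\pmod N$. For every non-trivial simple $V$ the Galois symmetry and periodicity give $\sigma_a(\nu_p(V))=\nu_{ap}(V)=\nu_1(V)=\dim\Hom_H(I,V)=0$, whence $\nu_p(V)=0$ since $\sigma_a$ is injective. Thus $(ii)$ fails.

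The implication $(i)\Rightarrow(iii)$ is the crux. Here I would pass to the Drinfeld double: $\Mod{D(H)}$ is modular by \cite{Mu}, and Theorem~\ref{t:pos} gives $\nu_p(D(H))=|\nu_p(H)|^2$; since $p\mid\dim H$ implies $p\mid\dim D(H)$, it suffices to prove $\nu_p(D(H))\ne1$, for then $\nu_p(H)=1$ would force $\nu_p(D(H))=1$, a contradiction. In the modular setting the Gauss-sum formula from the proof of Proposition~\ref{p1} is available, and the target becomes a categorical analogue of Frobenius' theorem on the number of solutions of $x^p=1$ in a finite group: when $p$ divides the global dimension, the $p$-th indicator of the regular object should lie in the ideal $(p)$ of $\BZ[\zeta_N]$ and hence cannot equal $1$. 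I expect this to be the main obstacle. One should note that the naive multinomial congruence $\bigl(\sum_i d_i^2\om_i^p\bigr)\bigl(\sum_j d_j^2\om_j^{-p}\bigr)\equiv(\dim\CC)^p\pmod p$ becomes vacuous precisely when $p\mid\dim\CC$, so it cannot by itself locate $\nu_p(D(H))$ modulo $p$; the genuine content is the finer divisibility underlying the Cauchy theorem of \cite{NS07a}, which must account for the ramification of $p$ in $\BZ[\zeta_N]$ and for the possibility that the pivotal dimensions are irrational. I would therefore organize this part so that it reduces cleanly to that congruence for the modular double.
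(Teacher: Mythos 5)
Your implications (iii)$\Rightarrow$(ii) and (ii)$\Rightarrow$(i) match the paper's argument: the first is the same additivity computation over the decomposition of the regular module, and the second is the same Galois/Cauchy-theorem argument (the paper applies $\sigma:\zeta_N\mapsto\zeta_N^p$ directly to the formula $\nu_p(V)=\frac{1}{\dim H}\Tr(\theta^p_{K(V)})$ to get $\nu_p(V)=\sigma(\nu_1(V))=0$; your version with $a\equiv p^{-1}\pmod N$ is the same computation read backwards). The problem is (i)$\Rightarrow$(iii), where you do not actually give a proof: you reduce to showing $\nu_p(D(H))\ne 1$ via $\nu_p(D(H))=|\nu_p(H)|^2$, then correctly observe that the Gauss-sum congruence becomes vacuous exactly when $p\mid\dim H$, and defer to an unproved ``finer divisibility'' behind the Cauchy theorem. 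That is a genuine gap --- the one implication that carries the content of the theorem is left open --- and the detour through the double does not help, because after the reduction you face the same question for $D(H)$ that you faced for $H$.

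The paper closes this implication with a short elementary argument applied to $H$ itself, with no modularity, no double, and no Gauss sums. By \cite[p 71]{NS07b}, $\nu_p(H)=\Tr(E)$ for a $\BC$-linear operator $E$ on $\Hom_H(\BC,H^{\ot p})$ satisfying $E^p=\id$. Assume $\nu_p(H)=1$. Since $\Tr(E)$ is a sum of $\dim\Hom_H(\BC,H^{\ot p})$ many $p$-th roots of unity, each congruent to $1$ modulo the prime $(1-\zeta_p)$ of $\BZ[\zeta_p]$, and since a rational integer divisible by $1-\zeta_p$ is divisible by $p$, one gets the congruence of integers
\[
1=\nu_p(H)\equiv\dim\Hom_H(\BC,H^{\ot p})=(\dim H)^{p-1}\pmod p ,
\]
which forces $p\nmid\dim H$. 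This is precisely the ``Frobenius-type'' congruence you were looking for, but it lives on $\Hom_H(\BC,H^{\ot p})$, whose dimension $(\dim H)^{p-1}$ is computable, rather than on the twists $\om_i$ of the modular double. If you want to keep your route, you would still need exactly this trace-of-a-$p$-torsion-operator congruence, so the reduction to $D(H)$ buys nothing; I recommend replacing your third step by the argument above.
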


\begin{proof}
 By \cite{Mu}, the quantum  (Drinfeld) double $D(H)$ of $H$ is modular. Let $\theta$ be the ribbon structure on $D(H)$ associated with
  the canonical pivotal structure, $N$ the order of $\theta$, and $\zeta_N$ a primitive $N$-th root of unity.

  (ii) $\Rightarrow$ (i): If $p \nmid \dim H$, then $p \nmid N$ (cf. \cite[Theorems 8.4 or 9.1]{NS07a} or \cite{Etingof02}). Therefore, $\sigma:\zeta_N \mapsto \zeta_N^p$ defines
  an automorphism of $\BQ(\zeta_N)/\BQ$. Moreover, by \cite[Theorem 4.1]{NS07a},
 \begin{equation}\label{eq:Tr}
    \nu_p(V) = \frac{1}{\dim H} \Tr(\theta^p_{K(V)})
 \end{equation}
  for $V \in \Mod{H}$, where $K(V) = D(H) \ot_H V$. Let $\hat{\G}=\{U_0, \dots,U_\ell\}$
  be a complete set of non-isomorphic simple $D(H)$-modules, and $\om_i$ be the scalar of the
  component $\theta_{U_i}$. Then $\om_i$ is a power of $\zeta_N$, and so $\sigma(\om_i) =\om_i^p$. Now
  let $V \in \Mod{H}$ be simple. One can consider the action of the Galois group on the indicators (cf. \cite[p 24]{KSZ2} or \cite[p 62]{NS07a}),
  $$
  \nu_p(V) = \frac{1}{\dim H} \Tr(\theta^p_{K(V)}) = \frac{1}{\dim H} \sum_{i=0}^\ell N_i d_i \om^p_i = \sigma\left(\frac{1}{\dim H}\sum_{i=0}^\ell N_i d_i \om_i\right)=\sigma(\nu_1(V))
  $$
  where $N_i = \dim \Hom_{D(H)}(K(V), U_i)$, $d_i = \dim U_i$ . Note that
  $\nu_1(V)$ is equal to 0 if $V$ is not the unit object and 1 otherwise.
  Thus, $\nu_p(V)=0$ for all non-trivial simple $H$-modules $V$.

  (iii) $\Rightarrow$ (ii):  Let $V_0, V_1, \dots, V_n$ form a complete set of non-isomorphic simple $H$-modules with
  $V_0$ being the trivial $H$-module. Since $H = \bigoplus_{i=0}^n (\dim V_i) V_i$,
  $\nu_p(H) = 1+\sum_{i=1}^n (\dim V_i)\nu_p(V_i)$. Thus, $\nu_p(H) \ne 1$ implies that
  $\nu_p(V_i) \ne 0$ for some $i > 0$.

  (i) $\Rightarrow$ (iii): Suppose $\nu_p(H)=1$. Recall from \cite[p 71]{NS07b} that
   $\nu_p(H)$ is the ordinary trace of a $\BC$-linear automorphism $E$ on $\Hom_H(\BC, H^{\ot p})$
   and
   $E^p=\id$. Therefore, by a linear algebra argument (cf. \cite[p 26]{KSZ2}),
   $$
  \nu_p(H)=\Tr(E) \equiv  \dim \Hom_H (\BC, H^{\ot p}) \,\text{ mod  }p\,.
   $$
   Since $\dim \Hom_H (\BC, H^{\ot p}) =(\dim H)^{p-1}$, we have
   $$
   1 \equiv (\dim H)^{p-1} \,\text{ mod  }p
   $$
   Therefore, $p \nmid \dim H$.
\end{proof}
\begin{remark}
 In the case of semisimple Hopf algebras $H$, the  indicator was defined as $\nu_n(V)=\chi_V(\Lambda^{[n]})$. In that case $N=\exp(H)$ and $\exp(H)\mid (\dim H)^3$ \cite{EG0}. It was shown in \cite{KSZ2} that there exists a linear operator $E$ on $\Hom_H(\BC, V^{\o n})$ such that $E^n=\id$,  $\Tr(E)=\nu_n(V)$, and Equation \ref{eq:Tr} holds, where now $\theta^{\pm 1}$  is the Drinfeld element of $D(H)$. Then Theorem \ref{t:primeind} can be proved in the context of semisimple Hopf algebras with the same arguments as above,  replacing some facts on quasi-Hopf algebras with corresponding results established in \cite{KSZ2} and \cite{EG0}.
\end{remark}
%------------------------------------------------------------------------------------------------------

\section{$\Tr(S)$ and the degrees of representations}

According to \cite[p 54]{I} and \cite[p 278]{JL}, one application of Frobenius-Schur
indicators for finite groups is to give an easier proof of the Brauer-Fowler theorem.
This theorem states that for a given positive integer $n$, there exist only finitely many simple
groups $G$ containing an involution $x$ such that
the centralizer of $x$ has order n; this was useful in the classification of finite simple
groups. Thus it seems worthwhile to try to find Hopf algebra analogs of these methods.

The main preliminary step in both \cite{I} and \cite{JL} shows that $H$ has a non-trivial
representation whose degree is bounded by a function involving $\Tr(S)$. This step generalizes
to Hopf algebras, provided we replace $i_G$ in the statement for $\BC G$ by $\Tr(S)$ in our result for $H$.

\begin{thm}\label{bound}
Let $H$ be a semisimple Hopf algebra of even dimension $n$ over $\BC$ and let $S$ be
its antipode. Then there exists an irreducible character $\chi= \chi^* \neq \eps$ such that
$$\deg(\chi) \leq \alpha := \frac{n-1}{|\Tr(S) -1|}.$$
Moreover if $\Tr(S) >1$ then $\nu_2(\chi)=1$, and if $\Tr(S) <1$ then $\nu_2(\chi)=-1$.
\end{thm}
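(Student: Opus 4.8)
The plan is to combine the character-sum formula \eqref{LM1} for $\Tr(S)$ with the Wedderburn identity $\sum_{\chi}\deg(\chi)^2=\dim H=n$ and the Frobenius--Schur fact that, over $\BC$, the second indicator of an absolutely simple module lies in $\{0,\pm 1\}$, equalling $\pm 1$ precisely on the self-dual irreducibles. First I would note that since $n$ is even, Proposition \ref{p:5.6} gives $\Tr(S)=\nu_2(H)\neq 1$, so $\alpha$ is well defined. Rewriting \eqref{LM1} as a sum over irreducible characters and splitting off the contribution $1$ of the trivial character $\eps$, I obtain
$$
\Tr(S)-1=\sum_{\chi\neq\eps}\nu_2(\chi)\deg(\chi)=\sum_{\chi\in P}\deg(\chi)-\sum_{\chi\in N}\deg(\chi),
$$
where $P$ and $N$ denote the sets of non-trivial self-dual irreducible characters with $\nu_2(\chi)=1$ and $\nu_2(\chi)=-1$ respectively, the characters with $\nu_2(\chi)=0$ dropping out.

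Next I would treat the cases $\Tr(S)>1$ and $\Tr(S)<1$ symmetrically. In the first case $\Tr(S)-1>0$ forces $P\neq\emptyset$ and gives $0<\Tr(S)-1\le\sum_{\chi\in P}\deg(\chi)$; in the second, $1-\Tr(S)>0$ forces $N\neq\emptyset$ and gives $0<1-\Tr(S)\le\sum_{\chi\in N}\deg(\chi)$. Letting $d_0$ be the minimal degree occurring in the relevant set (say $P$), the elementary estimate $\deg(\chi)\le\deg(\chi)^2/d_0$, valid because $\deg(\chi)\ge d_0\ge 1$, yields
$$
|\Tr(S)-1|\le\sum_{\chi}\deg(\chi)\le\frac{1}{d_0}\sum_{\chi}\deg(\chi)^2\le\frac{1}{d_0}\sum_{\chi\neq\eps}\deg(\chi)^2=\frac{n-1}{d_0},
$$
the final equality being Wedderburn. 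Rearranging gives $d_0\le(n-1)/|\Tr(S)-1|=\alpha$, and the character achieving the minimal degree $d_0$ in $P$ (resp. $N$) is the desired $\chi=\chi^*\neq\eps$: it meets the degree bound and carries exactly the sign $\nu_2(\chi)=1$ (resp. $-1$) required by the ``moreover'' clause.

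I expect the only points needing care to be bookkeeping rather than genuine obstacles. The crucial use of the hypothesis is the nonemptiness of the relevant index set $P$ or $N$, which is precisely where evenness of $n$ (hence $\Tr(S)\neq 1$) enters; and one must keep the signs straight so that the character produced carries the indicator value matching the sign of $\Tr(S)-1$. The one genuinely load-bearing inequality is $\sum\deg(\chi)\le d_0^{-1}\sum\deg(\chi)^2$, which drives the degree bound and holds simply because every degree in the relevant set is at least $d_0$.
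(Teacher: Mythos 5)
Your proposal is correct and follows essentially the same route as the paper: both start from $\Tr(S)\neq 1$ (Proposition \ref{p:5.6}), rewrite \eqref{LM1} as $\Tr(S)-1=\sum_{\chi\in P}\deg(\chi)-\sum_{\chi\in N}\deg(\chi)$, split into the cases $\Tr(S)>1$ and $\Tr(S)<1$ to get nonemptiness of $P$ or $N$, and then bound a degree using $\sum_{\chi\neq\eps}\deg(\chi)^2=n-1$. The only difference is the final elementary step --- you use the minimal-degree estimate $\sum\deg(\chi)\le d_0^{-1}\sum\deg(\chi)^2$, whereas the paper uses Cauchy--Schwarz followed by an averaging argument --- and both yield the same bound.
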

\begin{proof}
The formula is obviously well-defined since $\Tr(S) \ne 1$ by Proposition \ref{p:5.6}. Also note that $n= \dim H = 1+\sum\limits_{\chi \neq \eps}  \chi(1_H) ^2$, where as before
$\chi$ runs through the set $\Irr(H)$ of irreducible characters of $H$. Let
$$
\CT = \{ \chi \in \Irr(H)\mid\nu_2(\chi)=1\text{ and }\chi \neq \eps\}, \quad \text{and}\quad
 \CT' = \{\chi \in \Irr(H) \mid \nu_2(\chi) =-1\}\,.
$$
Obviously, $\eps \notin \CT'$ since $\nu_2(\eps)=1$.
From \eqref{LM1},  $\Tr(S) = \sum\limits_{\chi} \nu_2(\chi)
\chi_2(1_H)$. Therefore, since $\nu(\chi)\in \{0,1, -1\}$,
\begin{equation}\label{trace}
\Tr(S) -1 = \sum_{\chi \in \CT} \chi(1_H) - \sum_{\chi \in \mathcal{T'}} \chi(1_H).
\end{equation}

Since $\Tr(S) \neq 1$, there are two possibilities: either $\Tr(S) >1$ or $\Tr(S) <1$.
First assume that $\Tr(S) >1$. Then by Equation ($\ref{trace}$)
$$ 0<\Tr(S) -1 \leq \sum\limits_{\chi \in \CT} \chi(1_H) $$
and, in particular,
$\CT$ is not empty. Thus, by the Cauchy-Schwartz inequality, we have
$$
( \Tr(S) -1) ^2 \leq (\sum_{\chi\in \CT} \chi(1_H)) ^2 \leq
|\CT| \sum_{\chi \in \CT} (\chi(1_H)) ^2 \leq |\CT| (n-1)
$$
which implies
$$
\sum_{\chi \in \CT} ( \chi(1_H)) ^2
\leq \sum_{\chi\neq \eps}( \chi(1_H)) ^2 =n-1
\leq |\CT| \frac{(n-1)^2}{ (\Tr(S) -1)^2}.
$$
Therefore there exists $\chi \in \CT$ such that $\chi(1_H)
\leq \ds\frac{n-1}{\Tr(S) - 1}$.

The case that $\Tr(S) <1$ is similar.  By
 \eqref{trace}, we obtain
$$
0<1-\Tr(S)  \leq \sum_{\chi\in \CT'}\chi(1_H)
$$
and, in particular, $\mathcal{T'}$ is non-empty. By the same argument, we have
$$
\sum_{\chi \in \mathcal{T'}} ( \chi(1_H)) ^2  \leq n - 1 \leq |\CT'| \frac{(n-1)^2}{( 1- \Tr(S))^2}.
$$
which implies $\chi(1_H) \leq \dfrac{n-1}{1-\Tr(S) }.$
\end{proof}
\begin{remark}
  In view of Theorem \ref{t:primeind}, $\nu_2(H) \ne 1$ for any semisimple quasi-Hopf algebra $H$ of even dimension. By the same proof, Theorem \ref{bound} remains to hold for any even dimensional semisimple quasi-Hopf algebra over $\BC$ if one replaces $\Tr(S)$ by $\nu_2(H)$.
\end{remark}
\noindent{\bf Acknowledgement:}
  The third author would like to thank Susan Montgomery and the University of Southern California for their hospitality during his sabbatical leave.

%\bibliographystyle{amsalpha}
%\bibliography{mybibl}

\begin{thebibliography}{KSZ2}
\bibitem[AD]{AD}
N.~Andruskiewitsch and J.~Devoto, \emph{Extensions of {H}opf algebras}, Algebra
  i Analiz \textbf{7} (1995), no.~1, 22--61. \MR{1334152 (96f:16044)}
\bibitem[Ba]{Bantay97}
P.~Bantay, \emph{The {F}robenius-{S}chur indicator in conformal field
  theory}, Phys. Lett. B \textbf{394} (1997), no.~1-2, 87--88. \MR{98c:81195}

\bibitem[BK]{BaKi}
B.~Bakalov and A.~Kirillov, Jr., \emph{Lectures on tensor categories
  and modular functors}, University Lecture Series, vol.~21, American
  Mathematical Society, Providence, RI, 2001. \MR{2002d:18003}
\bibitem[Dr]{Dri90}V.~ G.~Drinfel'd, {\it Quasi-Hopf algebras}, (Russian) Algebra i Analiz 1 (1989), no. 6, 114--148; translation in Leningrad Math. J. 1 (1990), no. 6, 1419--1457
\bibitem[Ep]{E} D.~B.~A.~Epstein, \emph{Functors between tensored categories}, Invent. Math. \textbf{1}(1966), 221--228. \MR{0213412 (35 \#4276)}
\bibitem[Et]{Etingof02}
Pavel Etingof, \emph{On {V}afa's theorem for tensor categories}, Math. Res.
  Lett. \textbf{9} (2002), no.~5-6, 651--657. \MR{1906068 (2003i:18009)}
\bibitem[EG1]{EG0}
Pavel Etingof and Shlomo Gelaki, \emph{On the exponent of finite-dimensional
  {H}opf algebras}, Math. Res. Lett. \textbf{6} (1999), no.~2, 131--140.
  \MR{MR1689203 (2000f:16045)}
\bibitem[EG2]{EG1}
P.~Etingof and S.~Gelaki, \emph{On families of triangular {H}opf
  algebras}, Int. Math. Res. Not. (2002), no.~14, 757--768. \MR{2002m:16036}
\bibitem[EG3]{EG2}
P.~Etingof and S.~Gelaki, \emph{On the quasi-exponent of finite-dimensional {H}opf algebras}, Math. Res. Lett. \textbf{9} (2002), no.~2-3, 277--287. \MR{1909645 (2003e:16046)}
\bibitem[ENO]{ENO}
P.~Etingof, D.~Nikshych, and V.~Ostrik, \emph{On fusion categories},
  Ann. of Math. (2) \textbf{162} (2005), no.~2, 581--642. \MR{2183279}
\bibitem[GM]{GM} R. Guralnick and S. Montgomery, \emph{Frobenius-Schur indicators for subgroups and the Drinfel'd double of Weyl groups}, Trans. Amer. Math. Soc. \textbf{361} (2009), 3611-3632.
\bibitem[FGSV]{FGSV99}
J.~Fuchs, A.~Ch. Ganchev, K.~Szlach{\'a}nyi, and P.~Vecserny{\'e}s,
  \emph{{$S\sb 4$} symmetry of {$6j$} symbols and {F}robenius-{S}chur
  indicators in rigid monoidal {$C\sp *$} categories}, J. Math. Phys.
  \textbf{40} (1999), no.~1, 408--426. \MR{99k:81111}
\bibitem[Is]{I} I. M. Isaacs, {\it Character Theory of Finite Groups}, Dover 1994 (reprint of the
1976 Academic Press original).

\bibitem[JL]{JL} G. James and M. Liebeck, {\it Representations and Characters of Groups}
2nd edition, Cambridge University Press, 2001.

\bibitem[J]{J} A. Jedwab, {\it A trace invariant for representations of Hopf algebras}, Comm. Algebra, to appear.
\bibitem[JK]{JK} A. Jedwab and L. Krop, {\it Trace-like invariant for representations of nilpotent liftings of quantum planes}, preprint.
\bibitem[JM]{JM} A. Jedwab and S. Montgomery, {\it Representations of some Hopf algebras
associated to the symmetric group $S_n$}, Alg. Rep. Theory, \textbf{12} (2009), 1-17.

\bibitem[Ka]{Kas} C. Kassel, {\it Quantum Groups}, Graduate Texts in Mathematics, 155. Springer-Verlag, New York, 1995. xii+531 pp

\bibitem[KMM]{KMM} Y. Kashina, G. Mason, and S. Montgomery, {\it Computing the Frobenius-Schur
indicator of abelian extensions of Hopf algebras}, J. Algebra \textbf{251} (2002), 888-913.

\bibitem[KSZ1]{KSZ} Y. Kashina, Y. Sommerh\"auser, and Y. Zhu, {\it Self-dual modules
of semisimple Hopf algebras}, J. Algebra 257 (2002), 88-96.

\bibitem[KSZ2]{KSZ2} Y. Kashina, Y. Sommerh{\"a}user, and Y. Zhu, {\it On higher
  {F}robenius-{S}chur indicators}, Mem. Amer. Math. Soc. \textbf{181} (2006),
  no.~855, viii+65. \MR{2213320}

%\bibitem[L]{L} R. G. Larson, {\it Characters of Hopf algebras},  J. Algebra \textbf{17} (1971), 351-368.

\bibitem[LR]{LR} R. G. Larson and D. Radford, {\it Semisimple cosemisimple Hopf algebras}, Amer. J. Math \textbf{110} (1988), 187-195.

\bibitem[LM]{LM}  V. Linchenko and S. Montgomery, {\it A Frobenius-Schur theorem
for Hopf algebras}, Alg. Rep. Theory, \textbf{3} (2000), 347-355.


\bibitem[MN]{MaN}  G. Mason and S.-H. Ng, {\it Central invariants and Frobenius-Schur indicators
for semisimple quasi-Hopf algebras}, Adv. Math. \textbf{190} (2005), 161-195.

\bibitem[M\"u]{Mu} M. M{\"u}ger, {\it From subfactors to categories and topology. {II}.
  The quantum double of tensor categories and subfactors}, J. Pure Appl.
  Algebra \textbf{180} (2003), no.~1-2, 159--219. \MR{1966525 (2004f:18014)}

\bibitem[NS1]{NS07a} S.-H. Ng and P. Schauenburg, {\it Frobenius-Schur indicators and
  exponents of spherical categories}, Adv. Math. \textbf{211} (2007), no.~1,
  34--71. \MR{2313527 (2008b:16067)}

\bibitem[NS2]{NS07b}
S.-H. Ng and P. Schauenburg, \emph{Higher {F}robenius-{S}chur indicators for pivotal categories},
  Hopf algebras and generalizations, Contemp. Math., vol. 441, Amer. Math.
  Soc., Providence, RI, 2007, pp.~63--90. \MR{2381536 (2008m:18015)}

\bibitem[NS3]{NS08}
S.-H. Ng and P Schauenburg, \emph{Central invariants and higher
  indicators for semisimple quasi-{H}opf algebras}, Trans. Amer. Math. Soc.
  \textbf{360} (2008), no.~4, 1839--1860. \MR{2366965 (2009d:16065)}


\bibitem[Ni]{Ni}  D. Nikshych, {\it $K_0$-rings and twisting of finite-dimensional semisimple
Hopf algebras}, Comm. Algebra (1998), 321--342.

\bibitem[R]{Rad94} D. Radford, {\it The trace function and Hopf algebras}, J. Algebra
\textbf{163} (1994), no. 3, 583--622.

\bibitem[S1]{Sch96} P. Schauenburg, {\it Hopf bi-Galois extensions},
Comm. Algebra \textbf{24} (1996), no. 12, 3797--3825. \MR{1408508 (97f:16064)}

\bibitem[S2]{Sch00} P. Schauenburg, {\it Bi-Galois objects over the Taft algebras}, Israel
  J. Math. \textbf{115} (2000), 101--123. \MR{1749674 (2001f:16080)}
  \bibitem[S3]{Sch04} P. Schauenburg, \emph{On the {F}robenius-{S}chur indicators for quasi-{H}opf
  algebras}, J. Algebra \textbf{282} (2004), no.~1, 129--139. \MR{2095575
  (2005h:16068)}
\bibitem[Sw]{Sw}
M. Sweedler, \emph{Hopf algebras}, W. A. Benjamin, Inc., New York, 1969,
  Mathematics Lecture Note Series.
\bibitem[Ta]{Tak01}
M.~Takeuchi, \emph{Modular categories and {H}opf algebras}, J. Algebra
  \textbf{243} (2001), no.~2, 631--643. \MR{1850651 (2002f:16092)}
\bibitem[Tu]{Turaev}
V.~G. Turaev, \emph{Quantum invariants of knots and 3-manifolds}, de Gruyter
  Studies in Mathematics, vol.~18, Walter de Gruyter \& Co., Berlin, 1994.
  \MR{1292673 (95k:57014)}
\end{thebibliography}
%\end{document}

\end{document}